\newcommand{\comment}[1]{}
\newtheorem{theorem}{Theorem}[section]
\newtheorem{definition}[theorem]{Definition}
\newtheorem{proposition}[theorem]{Proposition}
\newtheorem{lemma}[theorem]{Lemma}
\newcommand{\cali}[1]{\mathscr{#1}}
\newcommand{\Cc}{\cali{C}}\newcommand{\cO}{\cali{O}}
\DeclareMathOperator{\codim}{codim}
\DeclareMathOperator{\Bs}{Bs}
\DeclareMathOperator{\ric}{Ric}
\newcommand{\MP}{{\rm MP}}
\newcommand{\dist}{\mathop{\mathrm{dist}}\nolimits}
\newcommand{\bfs}{{\rm \mathbf{s}}} 
\newcommand{\mI}{{\mathcal I}}
\newcommand{\mO}{{\mathcal O}}
\newcommand{\FS}{{\rm FS}}
\newcommand{\C}{\mathbb{C}}
\newcommand{\N}{\mathbb{N}}
\newcommand{\R}{\mathbb{R}}
\renewcommand\P{\mathbb{P}}
\begin{document}

\title[H\"older singular metrics on big line bundles and equidistribution]
{H\"older singular metrics on big line bundles and equidistribution}

\author{Dan Coman}
\thanks{D.\ Coman is partially supported by the NSF Grant DMS-1300157}
\address{Department of Mathematics, Syracuse University, Syracuse, NY 13244-1150, USA}\email{dcoman@syr.edu}
\author{George Marinescu}
\address{Universit{\"a}t zu K{\"o}ln, Mathematisches Institut, Weyertal 86-90, 50931 K{\"o}ln, 
Deutschland   \& Institute of Mathematics `Simion Stoilow', Romanian Academy, Bucharest, Romania}
\email{gmarines@math.uni-koeln.de}
\thanks{G.\ Marinescu is partially supported by DFG funded projects SFB/TR 12, MA 2469/2-2}
\author{Vi{\^e}t-Anh Nguy{\^e}n}
\address{Math{\'e}matique-B{\^a}timent 425, UMR 8628, Universit{\'e} Paris-Sud, 91405 Orsay, France}
\email{VietAnh.Nguyen@math.u-psud.fr}
\thanks{V.-A. Nguyen is partially supported by Humboldt  Foundation, Max-Planck Institute for Mathematics
and  Vietnam Institute for Advanced Study in Mathematics (VIASM)}
\thanks{Funded through the Institutional Strategy of the University of Cologne within the German Excellence Initiative}
\subjclass[2010]{Primary 32L10; Secondary 32A60, 32U40, 32W20, 53C55, 81Q50.}
\keywords{Bergman kernel function, Fubini-Study current, big line bundle, singular Hermitian metric, 
zeros of random holomorphic sections}
\date{September 21, 2015}

\pagestyle{myheadings}

\begin{abstract}
We show that normalized currents of integration along the common zeros of random $m$-tuples of sections of 
powers of $m$ singular Hermitian big line bundles on a compact K\"ahler manifold distribute asymptotically 
to the wedge product of the curvature currents of the metrics. If the Hermitian metrics are H\"older with 
singularities we also estimate the speed of convergence.
\end{abstract}

\maketitle
\tableofcontents

\section{Introduction} \label{introduction}

\par Random polynomials or more generally holomorphic sections and the distribution of their zeros 
represent a classical subject in analysis
\cite{BlPo:32,ET50,Ham56,Kac49}, and they have been more recently used to model quantum
chaotic eigenfunctions \cite{BBL,NoVo:98}.

This area witnessed intense activity recently 
\cite{BL13,BS07,BMO14,DMS,DS06b,Sh08,ShZ99,ST04}, and especially results about
equidistribution of holomorphic sections in singular Hermitian holomorphic bundles were obtained
\cite{CM11,CM13,CM13b,CMM14,DMM14} with emphasis on the speed of convergence. 
The equidistribution is linked to the Quantum Unique Ergodicity conjecture of Rudnick-Sarnak \cite{RuSa:94}, 
cf.\  \cite{HolSou:10,Mars11}.

The equidistribution of common zeros of several sections is particularly interesting. Their study is difficult
in the singular context and equidistribution with the estimate of convergence speed
was established in \cite{DMM14} for H\"older continuous metrics.

In this paper we obtain the equidistribution of common zeros of sections of $m$ singular Hermitian
line bundles under the hypothesis that the metrics are continuous outside analytic sets
intersecting generically. We will moreover introduce the notion of H\"older metric with
singularities and establish the equidistribution with convergence speed of common zeros.
 
Let $(X,\omega)$ be a compact K\"ahler manifold of dimension $n$ and $\dist$ be 
the distance on $X$ induced  by $\omega$. If $(L,h)$ is a singular Hermitian holomorphic line bundle 
on $X$ we denote by $c_1(L,h)$ its curvature current. Recall that if $e_L$ is a holomorphic frame of $L$ 
on some open set $U\subset X$ then $|e_L|^2_h=e^{-2\phi}$, where $\phi\in L^1_{loc}(U)$ is called 
the {\it local weight} of the metric $h$ with respect to $e_L$, and $c_1(L,h)|_U=dd^c\phi$. 
Here $d=\partial+\overline\partial$,  $d^c= \frac{1}{2\pi i}(\partial -\overline\partial)$. 
We say that $h$ is {\it positively curved} if $c_1(L,h)\geq 0$ in the sense of currents. 
This is equivalent to saying that the local weights $\phi$ are plurisubharmonic (psh).

Recall that a holomorphic line bundle $L$ is called \emph{big} 
if its Kodaira-Iitaka dimension equals the dimension of $X$ (see
\cite[Definition\,2.2.5]{MM07}). By the Shiffman-Ji-Bonavero-Takayama 
criterion \cite[Lemma\,2.3.6]{MM07},
$L$ is big if and only if it admits a singular metric $h$ 
with $c_1(L,h)\geq\varepsilon\omega$ for some $\varepsilon>0$.

\par Let $(L_k,h_k)$, $1\leq k\leq m\leq n$, be $m$ singular Hermitian holomorphic line bundles on $(X,\omega)$. 
Let $H^0_{(2)}(X,L_k^p)$ be the Bergman space of $L^2$-holomorphic sections of 
$L_k^p:=L_k^{\otimes p}$ relative to the metric $h_{k,p}:=h_k^{\otimes p}$ 
induced by $h_k$ and the volume form $\omega^n$ on $X$, endowed with the inner product
\begin{equation}\label{e:ip}
(S,S')_{k,p}:=\int_{X}\langle S,S'\rangle_{h_{k,p}}\,\omega^n\,,\;\,S,S'\in H^0_{(2)}(X,L_k^p).
\end{equation}
Set $\|S\|_{k,p}^2=(S,S)_{k,p}$, $d_{k,p}=\dim H^0_{(2)}(X,L_k^p)-1$. For every $p\geq 1$ we consider the multi-projective space
\begin{equation}\label{e:X_p}
{\mathbb X}_p:= \P H^0_{(2)}(X,L^p_1)\times\ldots\times\P H^0_{(2)}(X,L^p_m)
\end{equation} 
equipped with the probability measure $\sigma_p$ which is the product of the Fubini-Study 
volumes on the components. If $S\in H^0(X,L_k^p)$ we denote by $[S=0]$ the current of 
integration (with multiplicities) over the analytic 
hypersurface $\{S=0\}$ of $X$.  Set 
\[
[\bfs_p=0]:=[s_{p1}=0]\wedge\ldots\wedge[s_{pm}=0]\,,\:\:\text{for 
$\bfs_p=(s_{p1},\ldots,s_{pm})\in {\mathbb X}_p$,}
\]
whenever this is well-defined (cf.\ Section \ref{S:FSB}). 
We also consider the probability space
$$(\Omega,\sigma_\infty):= \prod_{p=1}^\infty ({\mathbb X}_p,\sigma_p)\,.$$
Let us recall the following:
\begin{definition}\label{D:genpos}
We say that the analytic subsets $A_1,\ldots,A_m$, $m\leq n$, of a compact complex manifold $X$ of dimension $n$ are in general position if $\codim A_{i_1}\cap\ldots\cap A_{i_k}\geq k$ for every $1\leq k\leq m$ and $1\leq i_1<\ldots<i_k\leq m$.
\end{definition} 

Here is  our first main result.
\begin{theorem}\label{T:main1}
Let $(X,\omega)$ be a compact K\"ahler manifold of dimension $n$ and 
$(L_k,h_k)$, $1\leq k\leq m\leq n$, be $m$ singular Hermitian holomorphic line 
bundles on $X$ such that $h_k$ is continuous outside a proper analytic subset $\Sigma(h_k)\subset X$, 
$c_1(L_k,h_k)\geq\varepsilon\omega$ on $X$ for some $\varepsilon>0$, and $\Sigma(h_1),\ldots,\Sigma(h_m)$ 
are in general position. Then for $\sigma_\infty$-a.e. $\{\bfs_p\}_{p\geq1}\in \Omega$, 
we have in the weak sense of currents on $X$, 
$$\frac{1}{p^m}\,[\bfs_p=0]\to c_1(L_1,h_1)\wedge\ldots\wedge c_1(L_m,h_m)\,\text{ as }p\to\infty\,.$$ 
\end{theorem}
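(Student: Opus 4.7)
The plan is to decompose each individual zero current by Poincar\'e--Lelong, to split off a ``deterministic'' Fubini--Study piece that can be analyzed via Bergman kernel asymptotics, and then to take the $m$-fold wedge product using the intersection theory of currents with continuous potentials off analytic sets in general position. Concretely, let $P_{k,p}$ denote the Bergman kernel function of $H^0_{(2)}(X,L_k^p)$, let $F_{k,p}\colon X\dashrightarrow \P^{d_{k,p}}$ be the associated Kodaira map, and set $\gamma_{k,p}:=\frac{1}{p}F_{k,p}^*\omega_{\FS}$. The Poincar\'e--Lelong formula combined with the identity $\gamma_{k,p}=c_1(L_k,h_k)+\frac{1}{2p}dd^c\log P_{k,p}$ gives
$$\frac{1}{p}[s_{pk}=0]=\gamma_{k,p}+\frac{1}{p}dd^c u_{k,p}(s_{pk}),\qquad u_{k,p}(s):=\log\frac{|s|_{h_{k,p}}}{\sqrt{P_{k,p}}},$$
so by multilinearity
$$\frac{1}{p^m}[\bfs_p=0]=\bigwedge_{k=1}^m\Bigl(\gamma_{k,p}+\frac{1}{p}dd^c u_{k,p}(s_{pk})\Bigr).$$
The goal is then to show that the purely deterministic term $\gamma_{1,p}\wedge\cdots\wedge\gamma_{m,p}$ converges to $c_1(L_1,h_1)\wedge\cdots\wedge c_1(L_m,h_m)$, and that every mixed term tends to zero.

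For the deterministic factors I would invoke Bergman kernel asymptotics for singular big line bundles in the spirit of the earlier Coman--Marinescu work: since $c_1(L_k,h_k)\ge\varepsilon\omega$ and $h_k$ is continuous off $\Sigma(h_k)$, the potentials of $\gamma_{k,p}$ are uniformly bounded above on $X$, $\gamma_{k,p}\to c_1(L_k,h_k)$ weakly on $X$, and the local potentials converge locally uniformly on $X\setminus\Sigma(h_k)$. The general-position hypothesis makes $c_1(L_1,h_1)\wedge\cdots\wedge c_1(L_m,h_m)$ well-defined \`a la Demailly, and Bedford--Taylor continuity on $X\setminus\bigcup_k\Sigma(h_k)$ together with a Chern--Levine--Nirenberg mass estimate across the singular locus should yield $\gamma_{1,p}\wedge\cdots\wedge\gamma_{m,p}\to c_1(L_1,h_1)\wedge\cdots\wedge c_1(L_m,h_m)$ weakly. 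For the random errors, the identity $\sqrt{P_{k,p}(x)}=\sup_{\|s\|_{k,p}=1}|s(x)|_{h_{k,p}}$ forces $u_{k,p}(s)\le 0$, and a standard sphere-concentration estimate in $H^0_{(2)}(X,L_k^p)$ gives
$$\sigma_p\bigl\{s:-u_{k,p}(s)(x)>\lambda\bigr\}\le C\,d_{k,p}\,e^{-2\lambda}\quad(\lambda>0).$$
Integrating in $x$ and using $d_{k,p}=O(p^n)$, a Borel--Cantelli argument provides $\|u_{k,p}(s_{pk})\|_{L^1(X,\omega^n)}=O(\log p)$, hence $\frac{1}{p}u_{k,p}(s_{pk})\to 0$ in $L^1$ for $\sigma_\infty$-a.e.\ sequence $\{\bfs_p\}_{p\ge 1}$.

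The principal obstacle is the last step: every mixed term in the expansion contains at least one factor $\frac{1}{p}dd^c u_{k,p}(s_{pk})$ and must converge weakly to $0$, but wedge products of positive closed $(1,1)$-currents are highly discontinuous in general, and here $u_{k,p}(s_{pk})$ is only $L^1$-small, not uniformly bounded. The way out is again the general-position hypothesis: each non-random factor $\gamma_{j,p}$ has a potential that is continuous on $X\setminus\Sigma(h_j)$ with uniform-in-$p$ bounds, so one may inductively form all the wedges by Bedford--Taylor on the complement of $\bigcup_j\Sigma(h_j)$ and control the mass of the resulting currents near the (codimension $\ge m$) intersection of the $\Sigma(h_j)$ by a Chern--Levine--Nirenberg-type inequality. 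The a.s.\ $L^1$-smallness of $\frac{1}{p}u_{k,p}(s_{pk})$, combined with these uniform mass bounds on the ``good'' factors, forces each mixed term to tend to $0$ in mass, and hence weakly. The technical heart of the argument will be making these Chern--Levine--Nirenberg estimates uniform in $p$ near the general-position locus of $\Sigma(h_1)\cup\cdots\cup\Sigma(h_m)$, thereby reconciling singular intersection theory with the probabilistic error control.
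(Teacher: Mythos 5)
Your overall architecture---split $\frac{1}{p}[s_{pk}=0]$ into the normalized Fubini--Study current plus $\frac{1}{p}dd^c u_{k,p}(s_{pk})$, prove convergence of the deterministic wedge product, and show the random remainders vanish---is natural, and your deterministic step matches Proposition \ref{P:FSwedge}\,(iii) of the paper in substance. Note, however, that the paper does not run a Chern--Levine--Nirenberg argument there: it uses the uniform lower bound $P_{k,p}\geq c>0$ (hence $u_{k,p}\geq u_k-C/p$) together with the Forn\ae ss--Sibony convergence theorem for wedge products of currents whose potentials converge in $L^1$ and are bounded below by the limit potential; it is this one-sided bound, which you never invoke, that makes the wedge product continuous along the sequence.

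The genuine gap is in the mixed terms, and you have located it without closing it. Writing such a term as $dd^c u_{k,p}(s_{pk})\wedge T_p$ with $T_p$ a positive closed current built from the remaining factors, integration by parts gives $\langle dd^c u_{k,p}(s_{pk})\wedge T_p,\phi\rangle=\int_X u_{k,p}(s_{pk})\,T_p\wedge dd^c\phi$, so what is needed is smallness of $u_{k,p}(s_{pk})$ in $L^1$ of the \emph{trace measure of $T_p$}, not in $L^1(X,\omega^n)$, which is all your sphere-concentration estimate delivers. The trace measures of the $T_p$ are singular, $p$-dependent, and can charge exactly the regions (base loci, neighborhoods of the $\Sigma(h_j)$, zero sets of the other random sections) where $u_{k,p}(s_{pk})$ is most negative; a CLN bound ``uniform in $p$'' strong enough to rule this out is precisely the missing ingredient, and nothing in an $O(\log p)$ bound for $\|u_{k,p}(s_{pk})\|_{L^1(\omega^n)}$ prevents $T_p\wedge dd^c\phi$ from concentrating where $u_{k,p}(s_{pk})\sim -p$. (Even the meaning of ``tend to $0$ in mass'' is unclear here, since $dd^c u_{k,p}(s_{pk})$ is not positive.) The paper sidesteps this factor-by-factor analysis entirely: it treats the product Kodaira map $\Phi_p:X\dashrightarrow{\mathbb X}_p$ as a single Dinh--Sibony meromorphic transform, applies the abstract speed estimate of Theorem \ref{T:Dinh-Sibony} on the multiprojective space ${\mathbb X}_p$ (where the relevant qpsh functions live and the moderate-volume quantities $R$ and $\Delta$ are controlled by Lemma \ref{L:Dinh-Sibony}), and obtains Theorem \ref{T:speed}; combined with Proposition \ref{P:FSwedge}\,(iii) this proves the statement. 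A secondary omission: you never verify that $[\bfs_p=0]$ is well defined for $\sigma_p$-a.e.\ $\bfs_p$, which requires the Bertini-type Propositions \ref{P:Bertini} and \ref{P:Bertini2}.
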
 

\par In order to prove this theorem we show in Theorem \ref{T:speed} that the currents 
$\frac{1}{p^m}\,[\bfs_p=0]$ distribute as $p\to\infty$ like the wedge product of the normalized 
Fubini-Study currents of the spaces $H^0_{(2)}(X,L_k^p)$ defined in \eqref{e:FSdef} below. 
Then in Proposition \ref{P:FSwedge} we prove that the latter sequence of currents converges 
to $c_1(L_1,h_1)\wedge\ldots\wedge c_1(L_m,h_m)$.

\par Our second main result gives an estimate of the speed of convergence in 
Theorem \ref{T:main1} in the case when the metrics are H\"older with singularities.  

\begin{definition}\label{D:Hosing}
We say that a function $\phi:\ U\to[-\infty,\infty)$ defined on an open subset $U\subset X$ is 
H\"older with singularities along a proper analytic subset $\Sigma\subset X$ if there exist constants 
$c,\varrho>0$ and $0<\nu\leq 1$ such that  
\begin{equation}\label{e:Hdef}
|\phi(z)-\phi(w)|\leq\frac{c\,\dist(z,w)^\nu}{\min \{\dist(z,\Sigma),  \dist(w,\Sigma)\} ^\varrho}
\end{equation}
holds for all $ z,w\in U\setminus \Sigma$. A singular metric $h$ on $L$ is called H\"older with 
singularities along a proper analytic subset $\Sigma\subset X$ if all its local weights are 
H\"older functions with singularities along $\Sigma$.
\end{definition}

\par H\"older singular Hermitian metrics appear frequently in complex geometry
and pluri-potential theory.
Let us first observe that metrics with analytic singularities \cite[Definition\,2.3.9]{MM07},
which are very important
for the regularization of currents and for transcendental methods in algebraic geometry
\cite{BeDe12,CM13b,D90,D92,D93b},
are H\"older metrics with singularities. 
The  class of H\"older metrics with singularities  is invariant under pull-back and push-forward 
by meromorphic maps. In particular, this class is invariant under 
birational maps, e.\,g.\ blow-up and blow-down. They occur also as certain 
quasiplurisubharmonic upper envelopes (e.\,g.\  Hermitian metrics with minimal singularities 
on a big line bundle, equilibrium metrics, see \cite{BeBo10,DMM14,DMN15}, 
especially \cite[Theorem\,1.4]{BeDe12}).
   
\begin{theorem}\label{T:main2}   
In the setting of Theorem \ref{T:main1} assume in addition that $h_k$ is H\"older with singularities along $\Sigma(h_k)$. Then there exist a constant $\xi>0$ depending only on $m$ and a constant $c=c(X,L_1,h_1,\ldots,L_m,h_m)>0$ with the following property: For any sequence of positive numbers $\{\lambda_p\}_{p\geq1}$ such that 
$$\liminf_{p\to\infty} \frac{\lambda_p}{\log p}>(1+\xi n)c,$$ 
there are subsets $E_{p}\subset{\mathbb X}_p$ such that for $p$ large enough, 
\\[2pt]
(a) $\sigma_p(E_{p})\leq  c p^{\xi n} \exp(-\lambda_p/ c)$\,,
\\[2pt]
(b) if $\bfs_p\in {\mathbb X}_p\setminus E_{p}$ we have 
$$\Big|\Big \langle\frac{1}{p^m}[\bfs_p=0]-\bigwedge_{k=1}^mc_1(L_k,h_k),\phi\Big\rangle\Big|
\leq \frac{c\lambda_p}{p}  \| \phi\|_{\Cc^2}\,,$$ 
for any form $\phi$ of class $\Cc^2$\,.  

In particular, the last estimate holds for $\sigma_\infty$-a.e. sequence $\{\bfs_p\}_{p\geq1}\in \Omega$ provided that $p$ is large enough.
\end{theorem}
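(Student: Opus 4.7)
The proof splits naturally into a probabilistic piece, handled by Theorem \ref{T:speed}, and a deterministic piece comparing normalized Fubini-Study currents with the curvature currents, where the H\"older-singular hypothesis is crucial. Denote by $\gamma_{p,k}$ the (un-normalized) Fubini-Study current associated with $H^0_{(2)}(X,L_k^p)$, so that $\tfrac{1}{p}\gamma_{p,k}-c_1(L_k,h_k)=\tfrac{1}{2p}\,dd^c\log K_{k,p}$, where $K_{k,p}$ is the Bergman kernel function. I would write
$$\tfrac{1}{p^m}[\bfs_p=0]-\bigwedge_{k=1}^m c_1(L_k,h_k)=A_p(\bfs_p)+B_p,$$
with $A_p(\bfs_p)=\tfrac{1}{p^m}\bigl([\bfs_p=0]-\gamma_{p,1}\wedge\cdots\wedge\gamma_{p,m}\bigr)$ and $B_p=\tfrac{1}{p^m}\gamma_{p,1}\wedge\cdots\wedge\gamma_{p,m}-\bigwedge_k c_1(L_k,h_k)$, and treat the two pieces separately. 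Theorem \ref{T:speed} directly supplies subsets $E_p\subset \mathbb{X}_p$ with $\sigma_p(E_p)\leq cp^{\xi n}\exp(-\lambda_p/c)$ such that $|\langle A_p(\bfs_p),\phi\rangle|\leq (c\lambda_p/p)\|\phi\|_{\Cc^2}$ for $\bfs_p\notin E_p$; this is the entire probabilistic half of the conclusion and requires no H\"older hypothesis.

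The deterministic term $B_p$ I would handle by telescoping: writing $\alpha_{p,k}:=\tfrac{1}{2p}dd^c\log K_{k,p}$, one has
$$B_p=\sum_{k=1}^m\Bigl(\bigwedge_{j<k}\tfrac{1}{p}\gamma_{p,j}\Bigr)\wedge\alpha_{p,k}\wedge\Bigl(\bigwedge_{j>k}c_1(L_j,h_j)\Bigr).$$
Since $h_k$ is continuous on $X\setminus\Sigma$, each factor $\tfrac{1}{p}\gamma_{p,j}$ and $c_1(L_j,h_j)$ has a continuous local potential there and Bedford--Taylor theory defines the wedge product on $X\setminus\Sigma$. Pairing each summand with $\phi$ and using $dd^c\phi=0$ near $\Sigma$ lets Stokes' theorem move $dd^c$ off $\log K_{k,p}$ and onto $\phi$, yielding integrals of the shape $\tfrac{1}{2p}\int_X\log K_{k,p}\cdot T_{p,k}\wedge dd^c\phi$, where $T_{p,k}$ is a wedge of normalized Fubini-Study and curvature currents with uniformly bounded mass. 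The H\"older-singular hypothesis then enters via the pointwise estimate
$$|\log K_{k,p}(x)|\leq C\bigl(\log p+|\log\dist(x,\Sigma)|\bigr),\qquad x\in X\setminus\Sigma,$$
with $C=C(X,L_k,h_k)$ independent of $p$. The upper bound on $K_{k,p}$ follows from the sub-mean-value inequality for the local psh weights $p\phi_k$, while the matching lower bound is produced by constructing peak sections of $L_k^p$ at scale $\sim p^{-1/2}$ via Demailly-type $L^2$ estimates, the H\"older inequality \eqref{e:Hdef} controlling how $p\phi_k$ varies over such balls in terms of a polynomial loss in $\dist(x,\Sigma)^{-1}$. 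Inserting this into the integrals gives $|\langle B_p,\phi\rangle|\leq (c/p)\bigl(\log p+|\log\dist(\supp dd^c\phi,\Sigma)|\bigr)\|\phi\|_{\Cc^2}$, and because $\liminf\lambda_p/\log p>(1+\xi n)c$ absorbs the $\log p$ term into $\lambda_p$, summing the bounds for $A_p$ and $B_p$ gives the theorem.

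The main obstacle I foresee is the uniform pointwise Bergman kernel estimate near $\Sigma$: one must exhibit, for every $x\in X\setminus\Sigma$ (in particular for $x$ close to $\Sigma$, where \eqref{e:Hdef} degenerates with an unbounded inverse power of $\dist(\cdot,\Sigma)$), a section of $L_k^p$ peaking at $x$ with $L^2$-norm controlled purely in terms of $\dist(x,\Sigma)$ and $p$. Once this single estimate is secured, the telescoping, the Stokes integration by parts, the mass bound on $T_{p,k}$, and the combination with Theorem \ref{T:speed} are essentially bookkeeping.
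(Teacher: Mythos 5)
Your proposal follows essentially the same route as the paper: the split into $A_p$ (handled by Theorem \ref{T:speed}) and the deterministic term $B_p$, the telescoping of $B_p$ into terms containing a single factor $\tfrac{1}{2p}dd^c\log P_{k,p}$, the integration by parts onto $dd^c\phi$ using that $\log P_{k,p}$ is continuous on $\supp dd^c\phi$, the uniform mass bound on the remaining wedge factors, and the two-sided pointwise Bergman kernel estimate $-C\leq\log P_{k,p}\leq C(\log p+|\log\dist(\cdot,\Sigma)|)$, which is exactly the paper's Theorem \ref{T:Bka} (with the deterministic half isolated as Theorem \ref{T:FSspeed1}). The one small misattribution is where the H\"older hypothesis enters the kernel estimate: in the paper the lower bound $P_{k,p}\geq 1/c$ is uniform and needs only continuity of $h_k$ outside $\Sigma$, while it is the \emph{upper} bound that uses the sub-mean-value inequality on a ball of radius $r=\dist(z,\Sigma)^{\varrho/\nu}p^{-1/\nu}$ together with \eqref{e:Hdef} to control the oscillation of the weight — but this does not affect the validity of your outline.
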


\par   Let $P_p$ be the Bergman kernel function 
of the space $H^0_{(2)}(X,L^p)$ defined in \eqref{e:Bk} below. The proof of Theorem \ref{T:main2} 
uses the estimate for $P_p$ obtained in Theorem \ref{T:Bka} in the case when the metric $h$ on 
$L$ is H\"older with singularities. 

\par One can readily specialize Theorem \ref{T:main2} to study the asymptotics with speed 
of common zeros of random $m$-tuples of sections of a (single) big line bundle endowed with a 
H\"older Hermitian metric with  singularities along a proper analytic subset $\Sigma\subset X$ 
of codimension $\geq m.$ Let $(L,h)$ be a singular Hermitian holomorphic line bundle on 
$(X,\omega)$ and $H^0_{(2)}(X,L^p)$ be the corresponding spaces of $L^2$-holomorphic sections. 
Consider the multi-projective space
$$ {\mathbb X}'_p:= (\P H^0_{(2)}(X,L^p))^m$$ 
endowed with the product probability measure $\sigma'_p$ induced by the Fubini-Study volume on 
$\P H^0_{(2)}(X,L^p)$, and let 
$$(\Omega',\sigma'_\infty):= \prod_{p=1}^\infty ({\mathbb X}'_p,\sigma'_p)\,.$$ 
If $\bfs_p=(s_{p1},\ldots,s_{pm})\in {\mathbb X}'_p$ we set 
$[\bfs_p=0]:=[s_{p1}=0]\wedge\ldots\wedge[s_{pm}=0]$, provided this current is well-defined. 
Applying Theorem \ref{T:main2} with $(L_k,h_k)=(L,h)$, $1\leq k\leq m$, 
and for the sequence $\lambda_p=(2+\xi n)c\log p$, we obtain:

\begin{theorem}\label{T:main3}
Let $(X,\omega)$ be a compact K\"ahler manifold of dimension $n$ and $(L,h)$ be a singular 
Hermitian holomorphic line bundle on $X$ such that $h$ is H\"older with  singularities along 
a proper analytic subset $\Sigma\subset X$ of codimension $\geq m,$ and 
$c_1(L,h)\geq\varepsilon\omega$ for some $\varepsilon>0$. 
Then there exist a constant $C>0$ depending only on $(X,\omega,L,h)$, 
and subsets $E_{p}\subset{\mathbb X}'_p\,$, such that for $p$ large enough,
\\[2pt]
(a) $\sigma'_p(E_{p})\leq Cp^{-2}$\,,
\\[2pt]
(b) if $\bfs_p\in {\mathbb X}'_p\setminus E_{p}$ we have 
$$\Big|\Big \langle\frac{1}{p^m}[\bfs_p=0]-c_1(L,h)^m,\phi\Big\rangle\Big|\leq 
C\,\frac{\log p}{p}\,\|\phi\|_{\Cc^2}\,,$$ 
for any form $\phi$ of class $\Cc^2$\,. 

In particular, the last estimate holds for $\sigma'_\infty$-a.e. sequence 
$\{\bfs_p\}_{p\geq1}\in \Omega'$ provided that $p$ is large enough.
\end{theorem}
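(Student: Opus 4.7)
This statement is essentially the case $m=n$, $L_k=L$, $h_k=h$ of Theorem~\ref{T:main2}: a finite set $\Sigma=\{x_1,\ldots,x_J\}$ in an $n$-dimensional manifold trivially satisfies the general-position hypothesis of Definition~\ref{D:genpos}. The only real obstacle is that Theorem~\ref{T:main2}(b) demands $dd^c\phi=0$ on a neighborhood of $\Sigma$, whereas here the test object is an arbitrary $\Cc^2$ function $\chi$. My plan is to split $\chi$ into a piece to which Theorem~\ref{T:main2} directly applies and a remainder supported on a small neighborhood of $\Sigma$, then optimize a cutoff scale $\delta=\delta_p$.

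Fix $\delta>0$ smaller than half the minimal pairwise distance in $\Sigma$, and for each $j$ pick a cutoff $\rho_j$ with $\rho_j\equiv 1$ on $B(x_j,\delta)$, $\supp\rho_j\subset B(x_j,2\delta)$, and $\|\rho_j\|_{\Cc^2}=O(\delta^{-2})$. Define
\[
\chi_\flat:=\chi-\sum_{j=1}^J\rho_j\bigl(\chi-\chi(x_j)\bigr),\qquad \chi_\sharp:=\chi-\chi_\flat=\sum_{j=1}^J\rho_j\bigl(\chi-\chi(x_j)\bigr).
\]
Then $\chi_\flat\equiv\chi(x_j)$ on $B(x_j,\delta)$, so $dd^c\chi_\flat=0$ on $B(\Sigma,\delta)$ and $\dist(\supp dd^c\chi_\flat,\Sigma)\ge\delta$; while $\chi_\sharp$ is supported in $B(\Sigma,2\delta)$. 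Because $|\chi(z)-\chi(x_j)|\le C\delta\|\chi\|_{\Cc^1}$ on $B(x_j,2\delta)$ by a Taylor estimate, the product rule combined with $\|\rho_j\|_{\Cc^2}=O(\delta^{-2})$ yields the crucial bounds
\[
\|\chi_\sharp\|_{L^\infty}=O\bigl(\delta\|\chi\|_{\Cc^2}\bigr),\qquad \|\chi_\flat\|_{\Cc^2}=O\bigl(\delta^{-1}\|\chi\|_{\Cc^2}\bigr),
\]
where the improvement from the naive $\delta^{-2}$ to $\delta^{-1}$ comes from the cancellation $\chi-\chi(x_j)=O(\delta)$ on $\supp\rho_j$.

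I then apply Theorem~\ref{T:main2} with $\phi=\chi_\flat$, choosing $\lambda_p$ a sufficiently large constant multiple of $\log p$ so that the exceptional set satisfies $\sigma'_p(E_p)\le Cp^{-2}$ (this is admissible since the rate in Theorem~\ref{T:main2}(a) is $cp^{\xi n}\exp(-\lambda_p/c)$, and $\xi,c$ depend only on $(X,\omega,L,h)$, not on $\chi$ or $\delta$). For $\bfs_p\notin E_p$ this produces
\[
\Bigl|\Bigl\langle\tfrac{1}{p^n}[\bfs_p=0]-c_1(L,h)^n,\chi_\flat\Bigr\rangle\Bigr|\le c\,\frac{\log p+|\log\delta|}{p}\,\delta^{-1}\|\chi\|_{\Cc^2}.
\]
For $\chi_\sharp$, the point is that $c_1(L,h)^n$ and $\tfrac{1}{p^n}[\bfs_p=0]$ are positive currents whose total masses both equal the cohomological intersection $\int_X c_1(L)^n=\operatorname{vol}(L)$, uniformly in $p$ and $\bfs_p$. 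The trivial bound then gives
\[
\Bigl|\Bigl\langle\tfrac{1}{p^n}[\bfs_p=0]-c_1(L,h)^n,\chi_\sharp\Bigr\rangle\Bigr|\le 2\operatorname{vol}(L)\,\|\chi_\sharp\|_{L^\infty}=O(\delta\|\chi\|_{\Cc^2}).
\]
Adding the two contributions and optimizing $\delta=\delta_p$ to balance $(\log p)/(p\delta)$ against $\delta$ yields the claimed speed $\log p/p^{1/3}$ in (b); the $\sigma'_\infty$-a.e.\ statement then follows from the Borel--Cantelli lemma applied to the summable bound $\sigma'_p(E_p)\le Cp^{-2}$.

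The main technical obstacle is the precise tracking of the $\delta$-dependence: naively, $\Cc^2$-derivatives of the cutoffs blow up like $\delta^{-2}$, and it is only the Taylor cancellation $\chi-\chi(x_j)=O(\delta)$ that rescues one power of $\delta$ in $\|\chi_\flat\|_{\Cc^2}$ while yielding the full factor $\delta$ in $\|\chi_\sharp\|_{L^\infty}$. A secondary point I will need to check carefully is that the constants $\xi$ and $c$ from Theorem~\ref{T:main2} are genuinely uniform (depending only on $(X,\omega,L,h)$), so that a single exceptional set $E_p$ controls all $\chi$ simultaneously; this is what makes the final $\sigma'_p(E_p)\le Cp^{-2}$ bound in the statement legitimately independent of the test function.
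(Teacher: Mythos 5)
Your argument is correct, and it reaches the result by a route that differs from the paper's in its organization, though the core analytic device is the same. The paper deduces Theorem~\ref{T:main3} from Theorem~\ref{T:speed} (applied with $\lambda_p=(2+\xi n)c\log p$, exactly as you choose, to get $\sigma'_p(E_p)\leq Cp^{-2}$) together with a separate \emph{deterministic} estimate, Theorem~\ref{T:FSspeed2}, which compares $\frac{1}{p^n}\gamma_p^n$ with $c_1(L,h)^n$ against arbitrary $\Cc^2$ test functions; the cutoff decomposition at scale $r=p^{-1/3}$ around the points of $\Sigma$, the Taylor bound $|\chi-\chi(x_j)|\leq Cr\|\chi\|_{\Cc^1}$, the mass identities, and the Bergman kernel estimate of Theorem~\ref{T:Bka} all live inside the proof of Theorem~\ref{T:FSspeed2}. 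You instead treat Theorem~\ref{T:main2} as a black box and perform the localization at the level of the final statement. Note that your $\chi_\flat$ is literally the paper's $\chi\chi_0+\sum_j\chi(x_j)\chi_j$; the difference is that you estimate $dd^c\chi_\flat$ as a whole and so exploit the cancellation $(dd^c\rho_j)(\chi-\chi(x_j))=O(\delta^{-1})$, whereas the paper bounds $\chi(x_j)\,dd^c\chi_j=O(r^{-2})$ term by term. Your version is therefore marginally sharper and would yield the rate $(\log p/p)^{1/2}$ upon optimizing $\delta$, though $\delta=p^{-1/3}$ already suffices for the stated bound. Two details worth making explicit: the mass identity $\big\|\frac{1}{p^n}[\bfs_p=0]\big\|=\|c_1(L,h)^n\|$ is justified by the cohomological computation of Lemma~\ref{L:d_p_delta_p} and holds only for $\bfs_p$ whose zero divisors are in general position, a full-measure set by Proposition~\ref{P:Bertini2} whose complement you should absorb into $E_p$; and one should observe (as you do) that the exceptional sets of Theorems~\ref{T:speed} and~\ref{T:main2} are defined as unions over all normalized test forms, hence are independent of $\chi$ and of $\delta$.
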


This paper is organized as follows. In Section \ref{S:Bka} we prove a pointwise estimate 
for the Bergman kernel function in the case of H\"older metrics with singularities. 
Section \ref{S:FSB} is devoted to the study of the intersection of Fubini-Study currents 
and to a version of the Bertini theorem. In Section \ref{S:MT} we consider the Kodaira map 
as a meromorphic transform and estimate the speed of convergence of the intersection 
of zero-divisors of $m$ bundles. We use this to prove Theorem \ref{T:main1}. 
Finally, in Section \ref{S:PT:main2&3} we prove Theorem \ref{T:main2}. 

\medskip

{\it Acknowledgment.} We thank Vincent Guedj for useful discussions regarding this paper. 
We are grateful to the referee for a nice idea to improve 
an earlier version of  Theorem \ref{T:FSspeed1}.  
 

\section{Asymptotic behavior of Bergman kernel functions}\label{S:Bka}

\par In this section we prove a theorem about the asymptotic behavior of the Bergman 
kernel function in the case  when the metric  is H\"older with  singularities. 

\par Let $(L,h)$ be a holomorphic line  bundle over a compact K\"ahler manifold $(X,\omega)$ 
of dimension $n$, where $h$ is a singular Hermitian metric on $L$. Consider the space 
$H^0_{(2)}(X,L^p)$ of $L^2$-holomorphic sections of $L^p$ relative to the metric 
$h^p:=h^{\otimes p}$ induced by $h$ and the volume form  
$\omega^n$ on $X$, endowed with the natural inner product (see \eqref{e:ip}). 
Since $H^0_{(2)}(X,L^p)$ is finite dimensional, let $\{S^p_j\}_{j=0}^{d_p}$ 
be an orthonormal basis and denote by $P_p$ the Bergman kernel function defined by 
\begin{equation}\label{e:Bk}
P_p(x)=\sum_{j=0}^{d_p}|S^p_j(x)|_{h^p}^2,\;\;|S^p_j(x)|_{h^p}^2:=
\langle S_j^p(x),S_j^p(x)\rangle_{h^p},\;x\in X.
\end{equation}
Note that this definition is independent of the choice of basis.

\begin{theorem}\label{T:Bka} 
Let $(X,\omega)$ be a compact K\"ahler manifold of dimension $n$ and $(L,h)$ be a singular 
Hermitian holomorphic line bundle on $X$ such that $c_1(L,h)\geq\varepsilon\omega$ for some 
$\varepsilon>0$. Assume that $h$ is H\"older with singularities along a proper analytic subset 
$\Sigma$ of $X$ and with parameters $\nu,\,\varrho$ as in \eqref{e:Hdef}.
 If $P_p$ is the Bergman kernel function defined by \eqref{e:Bk} for the space 
 $H^0_{(2)}(X,L^p),$ then there exist a constant $c>1$ and $p_0\in\N$ 
 which depend only on $(X,\omega,L,h)$ such that for all 
 $z\in X\setminus\Sigma$ and all $p\geq p_0$
\begin{equation}\label{e:Bka}
\frac{1}{c}\leq P_p(z) \leq  \frac{cp^{2n/\nu}}{\dist (z,\Sigma)^{2n\varrho/\nu}}\;\cdot
\end{equation}
\end{theorem}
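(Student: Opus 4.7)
The plan is to treat the two inequalities in \eqref{e:Bka} separately: the uniform lower bound via an Ohsawa--Takegoshi type extension that peaks at $z$, and the quantitative upper bound via the sub-mean value inequality applied to a local holomorphic representation of a section, exploiting the H\"older condition on $\phi$.

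For the lower bound, fix $z\in X\setminus\Sigma$ and work in a coordinate neighborhood $U$ of $z$ with a holomorphic frame $e_L$, letting $\phi$ be the corresponding local weight. Since $h$ is continuous at $z$, $\phi$ is bounded on a neighborhood of $z$, so $e^{-2p\phi}$ is locally integrable at $z$ and the multiplier ideal $\mathcal{I}(p\phi)_z$ is trivial. The curvature bound $c_1(L,h)\geq\varepsilon\omega$ gives $c_1(L^p,h^p)\geq p\varepsilon\omega$, and for $p\geq p_0$ the Ohsawa--Takegoshi $L^2$-extension theorem, in the form for singular metrics on compact K\"ahler manifolds, produces a section $S_p\in H^0_{(2)}(X,L^p)$ with $S_p(z)=e_L^p(z)$ and $\|S_p\|^2\leq C\,e^{-2p\phi(z)}$, the constant $C$ depending only on $(X,\omega,L,h)$. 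Since $|S_p(z)|^2_{h^p}=e^{-2p\phi(z)}$, the exponential factors cancel in the ratio and $P_p(z)\geq |S_p(z)|^2_{h^p}/\|S_p\|^2\geq 1/C$ uniformly in $z\in X\setminus\Sigma$.

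For the upper bound, let $S\in H^0_{(2)}(X,L^p)$ with $\|S\|=1$, set $\delta:=\dist(z,\Sigma)$, and write $S=f\,e_L^p$ locally. Since $|f|^2$ is plurisubharmonic, the sub-mean inequality gives $|f(z)|^2\leq C_n r^{-2n}\int_{B(z,r)}|f(w)|^2\,dV(w)$ whenever $r<\delta/2$ and $B(z,r)$ lies in the coordinate chart. Rewriting the integrand as $|S(w)|^2_{h^p}e^{2p\phi(w)}$ and applying the H\"older bound $\phi(w)-\phi(z)\leq c_0|w-z|^\nu/(\delta/2)^\varrho$, valid on $B(z,\delta/2)$ because $\dist(w,\Sigma)\geq\delta/2$ there, one obtains $|f(z)|^2\leq C_n r^{-2n}\,e^{2p\phi(z)}\exp\!\bigl(C_1 p r^\nu/\delta^\varrho\bigr)\|S\|^2$. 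Choosing $r:=\min\{\delta/2,\,(\delta^\varrho/p)^{1/\nu}\}$ balances the two factors: the exponential stays bounded by an absolute constant while $r^{-2n}$ contributes the factor $p^{2n/\nu}/\delta^{2n\varrho/\nu}$ in the interior regime, the opposite regime $p<C_2\delta^{\varrho-\nu}$ being dominated by the same expression. Multiplying by $e^{-2p\phi(z)}$ gives $|S(z)|^2_{h^p}\leq c\,p^{2n/\nu}/\delta^{2n\varrho/\nu}$, and the supremum over unit sections yields the same bound on $P_p(z)$.

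The most delicate point is ensuring uniformity of the lower bound across $z\in X\setminus\Sigma$: it is essential that the OT $L^2$-estimate scale like $e^{-2p\phi(z)}$ at the base point, so that the weight cancels in the ratio and the bound does not degenerate as $z\to\Sigma$ (where $\phi(z)\to-\infty$). The H\"older hypothesis plays no role in the lower bound (mere continuity at $z$ would suffice), but enters the upper bound decisively through $\nu$ and $\varrho$, which fix the optimal radius and hence the polynomial dependence on $p$ and $\dist(z,\Sigma)$.
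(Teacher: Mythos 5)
Your proposal follows essentially the same strategy as the paper's proof: reduce to the two-sided bound $1/c\leq P_p(z)\leq c\,r^{-2n}\exp\bigl(2p\,\mathrm{osc}_{B(z,r)}\phi\bigr)$ (which the paper imports from \cite[(7)]{CM11} and you re-derive from Ohsawa--Takegoshi and the sub-mean-value inequality), then choose $r\sim(\dist(z,\Sigma)^\varrho/p)^{1/\nu}$ so the exponential stays bounded by the H\"older condition. The resulting exponent $p^{2n/\nu}\dist(z,\Sigma)^{-2n\varrho/\nu}$ and the compactness argument to make constants uniform in $z$ coincide with the paper's.
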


\par Recall that by Theorem 5.3 in \cite{CM11} we have 
$\lim_{p\to\infty}\frac{1}{p}\,{ \log P_p(z)}=0 $ locally uniformly on 
$X\setminus \Sigma$ for any metric $h$   which is only continuous outside of $\Sigma$. 
Theorem \ref{T:Bka} refines \cite[Theorem 5.3]{CM11} in this context, and it is interesting
to compare it to the asymptotic expansion of the Bergman kernel function in the case of smooth metrics \cite{Be03,Ca99,HM12,MM07,MM08,Ti90,Z98}.

\begin{proof} 
The proof follows from \cite[Section\,5]{CM11}, which is based on techniques of 
Demailly \cite[Proposition\,3.1]{D82}, \cite[Section\,9]{D93b}.
Let $x\in X$ and $U_\alpha\subset X$ be a coordinate neighborhood of $x$ 
on which there exists a holomorphic frame $e_\alpha$ of $L$. Let $\psi_\alpha$ 
be a psh weight of $h$ on $U_\alpha$. Fix $r_0>0$ so that the (closed) ball 
$V:=B(x,2r_0)\Subset U_\alpha$ and let $U:=B(x,r_0)$. 
By \cite[(7)]{CM11} there exist constants $c_1>0$, $p_0\in\mathbb{N}$ so that 
$$-\frac{\log c_1}{p}\leq\frac{1}{p}\,\log P_p(z)\leq
\frac{\log(c_1r^{-2n})}{p}+2\left(\max_{B(z,r)}\psi_\alpha-\psi_\alpha(z)\right)$$
holds for all $p>p_0$, $0<r<r_0$ and $z\in U$ with $\psi_\alpha(z)>-\infty$.  

\par For $z\in U\setminus\Sigma$ and $r<\min\{\dist(z,\Sigma),r_0\}$ we have since $\psi_\alpha$ is H\"older that 
 $$\max_{B(z,r)}\psi_\alpha-\psi_\alpha(z)\leq\frac{cr^{\nu}}{\left(\dist(z,\Sigma)-r\right)^\varrho}\,,$$
where $c>0$ depends only on $x$. Taking $r=\dist(z,\Sigma)^{\varrho/\nu}p^{-1/\nu}<\dist(z,\Sigma)/2$ (for $p_0$ large enough), we obtain 
\begin{eqnarray*}
-\log c_1\leq\log P_p(z)&\leq&\log c_1-2n\log r+2^{\varrho+1}cpr^{\nu}\dist(z,\Sigma)^{-\varrho}\\
&=&c_0+2n\log\left(\dist(z,\Sigma)^{-\varrho/\nu}p^{1/\nu}\right)\;.
\end{eqnarray*}
This holds for all $z\in U\setminus\Sigma$ and $p>p_0$, with constants $r_0,p_0,c_0,c_1$ 
depending only on $x$. A standard compactness argument now finishes the proof.
\end{proof}


\section{Intersection of Fubini-Study currents and Bertini type theorem}\label{S:FSB}

\par In this section we show that the intersection of the Fubini-Study currents associated with line bundles as in Theorem \ref{T:main1} is well-defined. Moreover, we show that the sequence of wedge products of normalized Fubini-Study currents converges weakly to the wedge product of the curvature currents of $(L_k,h_k)$. We then prove that almost all zero-divisors of sections of large powers of these bundles are in general position in the sense of Definition \ref{D:genpos}. 

\par Let $V$ be a vector space of complex dimension $d+1$. If $V$ is endowed with a Hermitian metric, 
then we denote by $\omega_{_\FS}$ the induced Fubini-Study form on the projective space $\P(V)$ 
(see \cite[pp.\,65,\,212]{MM07}) normalized so that $\omega_{_{\FS}}^d$ is a probability measure. 
We also use the same notations for $\P(V^*)$.

\par We keep the  hypotheses and notation of Theorem \ref{T:main1}. Namely, $(L_k,h_k)$, $1\leq k\leq m\leq n$, are singular Hermitian holomorphic line bundles on the compact K\"ahler manifold $(X,\omega)$ of dimension $n$, such that $h_k$ is continuous outside a proper analytic subset $\Sigma(h_k)\subset X$, $c_1(L_k,h_k)\geq\varepsilon\omega$ for some $\varepsilon>0$, and $\Sigma(h_1),\ldots,\Sigma(h_m)$ are in general position in the sense of Definition \ref{D:genpos}.

Consider the space $H^0_{(2)}(X,L_k^p)$ of $L^2$-holomorphic sections of $L^p_k$ endowed with the inner product \eqref{e:ip}. Since  $c_1(L_k,h_k)\geq\varepsilon \omega,$ it is  well-known that $H^0_{(2)}(X,L_k^p)$ is nontrivial for $p$ sufficiently large, see e.\,g.\ Proposition \ref{P:Bsdim}. Let 
\[
d_{k,p}:=\dim H^0_{(2)}(X,L_k^p)-1.
\]
The  {\it Kodaira map} associated  with $(L^p_k,h_{k,p})$  is  defined by
\begin{equation}\label{e:Kodaira_alg}
\Phi_{k,p}:\  X\dashrightarrow  {\mathbb G}(d_{k,p}, H^0_{(2)}(X,L_k^p))\,,\:\:
\Phi_{k,p}(x):=\left\lbrace s\in H^0_{(2)}(X,L_k^p) :\  s(x)=0  \right\rbrace,
\end{equation}
where  ${\mathbb G}(d_{k,p}, H^0_{(2)}(X,L_k^p)) $ denotes the Grassmannian 
of hyperplanes in $H^0_{(2)}(X,L_k^p)$  (see \cite[p.\,82]{MM07}). 
Let us identify ${\mathbb G}(d_{k,p}, H^0_{(2)}(X,L_k^p))$ with $\P(H^0_{(2)}(X,L_k^p)^*)$ by sending a hyperplane to an equivalence class of non-zero complex linear functionals  on $H^0_{(2)}(X,L_k^p)$ having the hyperplane as their common kernel. By  composing $\Phi_{k,p}$ with this identification, we  obtain a meromorphic map  
\begin{equation}\label{e:Kodaira_alg_dual}
\Phi_{k,p}:\  X\dashrightarrow \P(H^0_{(2)}(X,L_k^p)^*).
\end{equation}
To get an  analytic  description of $\Phi_{k,p},$
let 
\begin{equation}\label{e:basis}
S^{k,p}_j\in H^0_{(2)}(X,L_k^p),\,j=0,\ldots,d_{k,p}\,,
\end{equation} 
be an orthonormal basis and denote by $P_{k,p}$ the Bergman kernel function of the space 
$H^0_{(2)}(X,L_k^p)$ defined as in \eqref{e:Bk}.
This  basis gives identifications $H^0_{(2)}(X,L_k^p)\simeq \C^{d_{k,p}+1}$ and 
$\P(H^0_{(2)}(X,L_k^p)^*)\simeq \P^{d_{k,p}}$.
Let $U$ be a contractible Stein open set in $X$, let $e_k$ be a local holomorphic frame for $L_k$ on $U$, 
and write $S_j^{k,p}=s_j^{k,p}e_k^{\otimes p},$ where  $s_j^{k,p}$ is a  holomorphic  function on $U.$
 By  composing $\Phi_{k,p}$ given in (\ref{e:Kodaira_alg_dual}) with the last  identification, 
 we  obtain a meromorphic map     $\Phi_{k,p}:X\dashrightarrow\P^{d_{k,p}}$   
which has the  following local expression
\begin{equation}\label{e:Kodaira}
\Phi_{k,p}(x)=[s_0^{k,p}(x):\ldots:s_{d_{k,p}}^{k,p}(x)]\,\text{ for }x\in U.
\end{equation}
It is  called {\it the Kodaira map defined by the basis $\{S^{k,p}_j\}_{j=0}^{d_{k,p}}$}.  

\par Next, we define the \emph{Fubini-Study currents} $\gamma_{k,p}$ of $H^0_{(2)}(X,L_k^p)$ by 
\begin{equation}\label{e:FSdef}
\gamma_{k,p}|_U=\frac{1}{2}\,dd^c\log\sum_{j=0}^{d_{k,p}}|s_j^{k,p}|^2,\;   
\end{equation}
where the open set $U$ and the holomorphic functions $s_j^{k,p}$ are as  above. 
Note that $\gamma_{k,p}$ is a positive closed current of bidegree $(1,1)$ on $X$, 
and is independent of the choice of basis. Actually, the Fubini-Study currents are pullbacks 
of the Fubini-Study forms by Kodaira maps, which justifies their name.

\par Let $\omega_{_{\FS}}$ be  the Fubini-Study form  on $\P^{d_{k,p}}.$  
By \eqref{e:Kodaira} and \eqref{e:FSdef},
the currents $\gamma_{k,p}$ can be described as pullbacks 
\begin{equation}\label{e:FSG}
\gamma_{k,p}=\Phi_{k,p}^*(\omega_{_{\FS}}),\quad 1\leq k\leq m.
\end{equation}  
 We introduce the psh function 
\begin{equation}\label{e:FSpot}
u_{k,p}:=\frac{1}{2p}\log\sum_{j=0}^{d_{k,p}}|s_j^{k,p}|^2=u_k+\frac{1}{2p}\,\log P_{k,p}\,\;\text{ on }\,U\,,
\end{equation}
where $u_k$ is the weight of the metric $h_k$ on $U$ corresponding to $e_k$, so $|e_k|_{h_k}=e^{-u_k}$.
Clearly, by \eqref{e:FSdef} and \eqref{e:FSpot},   $dd^c u_{k,p}=\frac{1}{p}\,\gamma_{k,p}$. 
Moreover, note that by \eqref{e:FSpot}, $\log P_{k,p}\in L^1(X,\omega^n)$ and 
\begin{equation}\label{e:FSB}
\frac{1}{p}\,\gamma_{k,p}=c_1(L_k,h_k)+\frac{1}{2p}\,dd^c\log P_{k,p}
\end{equation} 
as currents on $X$. By \cite[Theorem 5.1, Theorem 5.3]{CM11} (see also \cite[(7)]{CM11}) there exist $c>0$, $p_0\in\N$, such that if $p\geq p_0$, $1\leq k\leq m$ and $z\in X\setminus\Sigma(h_k)$, then $P_{k,p}(z)\geq c$. By \eqref{e:FSpot} it follows that 
\begin{equation}\label{e:FScomp}
u_{k,p}(z)\geq u_k(z)+\frac{\log c}{2p}\;,\,\;z\in U,\;p\geq p_0,\;1\leq k\leq m.
\end{equation}
For  $p\geq 1$ consider the following analytic subsets of $X$: 
$$\Sigma_{k,p}:=\left\lbrace x\in X:\,S^{k,p}_j(x)=0,\;0\leq j\leq d_{k,p}\right\rbrace,\;1\leq k\leq m\,.$$
Hence $\Sigma_{k,p}$ is the base locus of $H^0_{(2)}(X,L_k^p)$, 
and $\Sigma_{k,p}\cap U=\{u_{k,p}=-\infty\}$. Note also that $\Sigma(h_k)\cap U\supset\{u_k=-\infty\}$ and by \eqref{e:FScomp} we have $\Sigma_{k,p}\subset\Sigma(h_k)$ for $p\geq p_0$. 
\begin{proposition}\label{P:FSwedge}
In the hypotheses of Theorem \ref{T:main1} we have the following:
\smallskip
\par
(i) For all $p$ sufficiently large and every $J\subset\{1,\ldots,m\}$ the analytic sets $\Sigma_{k,p}\,$, $k\in J$, $\Sigma(h_\ell)\,$, $\ell\in J':=\{1,\ldots,m\}\setminus J$, are in general position. 
\smallskip
\par
(ii) If $p$ is sufficiently large then the currents 
$$\bigwedge_{k\in J}\gamma_{k,p}\;\wedge\,\bigwedge_{\ell\in J'}c_1(L_\ell,h_\ell)$$
are well defined on $X$, for every $J\subset\{1,\ldots,m\}$.
\smallskip
\par
(iii) $\frac{1}{p^m}\,\gamma_{1,p}\wedge\ldots\wedge\gamma_{m,p}\to c_1(L_1,h_1)\wedge\ldots\wedge c_1(L_m,h_m)$ as $p\to\infty$, in the weak sense of currents on $X$.
\end{proposition}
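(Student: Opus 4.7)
For part (i), the inclusion $\Sigma_{k,p}\subset\Sigma(h_k)$ for all $p\geq p_0$ is immediate from \eqref{e:FScomp}: on each coordinate chart $U$ the lower bound $u_{k,p}\geq u_k+(\log c)/(2p)$ forces $\Sigma_{k,p}\cap U=\{u_{k,p}=-\infty\}\subset\{u_k=-\infty\}\subset\Sigma(h_k)$. Given any $\ell$ distinct indices $i_1<\cdots<i_\ell$ from $\{1,\dots,m\}$ and a choice of the corresponding set from $\{\Sigma_{k,p}\}_{k\in J}\cup\{\Sigma(h_r)\}_{r\in J'}$, the intersection lies inside $\bigcap_{j=1}^{\ell}\Sigma(h_{i_j})$, which has codimension at least $\ell$ by the general-position hypothesis on the $\Sigma(h_k)$. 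Hence the new collection is in general position.

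For part (ii), the currents $\gamma_{k,p}$ and $c_1(L_\ell,h_\ell)$ are positive closed $(1,1)$-currents whose local potentials $u_{k,p}$ and $u_\ell$ are smooth, respectively continuous, outside the analytic sets $\Sigma_{k,p}$ and $\Sigma(h_\ell)$. Since by (i) these singular loci are in general position, Demailly's theorem on the existence of intersections of positive closed $(1,1)$-currents with locally bounded local potentials outside analytic sets in general position yields the desired wedge product as a well-defined positive closed $(m,m)$-current on $X$.

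For part (iii), I would argue in three steps. \emph{Step 1:} From \eqref{e:FSpot}, $u_{k,p}-u_k=\frac{1}{2p}\log P_{k,p}$, and by the asymptotic $\frac{1}{p}\log P_{k,p}\to 0$ locally uniformly on $X\setminus\Sigma(h_k)$ recalled in the excerpt from \cite[Theorem\,5.3]{CM11}, we obtain $u_{k,p}\to u_k$ locally uniformly on $X\setminus\Sigma(h_k)$, hence also on $X\setminus\Sigma$. \emph{Step 2:} On $X\setminus\Sigma$ all weights $u_k$ are continuous and the $u_{k,p}$ converge locally uniformly, so by the Bedford-Taylor continuity theorem for wedge products of continuous psh functions, $\frac{1}{p^m}\gamma_{1,p}\wedge\cdots\wedge\gamma_{m,p}$ converges weakly to $c_1(L_1,h_1)\wedge\cdots\wedge c_1(L_m,h_m)$ on $X\setminus\Sigma$. \emph{Step 3:} Extend the convergence to all of $X$ by combining (a) weak compactness of the approximating currents, which lie in the fixed cohomology class $c_1(L_1)\cdots c_1(L_m)$ and hence have uniformly bounded mass against $\omega^{n-m}$; (b) the convergence on $X\setminus\Sigma$ from Step 2; and (c) a Demailly-type continuity theorem for intersections of positive closed $(1,1)$-currents with psh potentials converging in $L^1_{loc}$ and locally bounded off analytic sets in general position.

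The main obstacle lies in Step 3 of (iii). Since the limit current $\bigwedge_k c_1(L_k,h_k)$ may genuinely carry mass on $\Sigma$ (for instance on $\Sigma(h_{i_1})\cap\cdots\cap\Sigma(h_{i_m})$), one cannot conclude from ``convergence on $X\setminus\Sigma$ plus equal total mass'' alone that the convergence holds as currents on $X$. The extension requires either an iterative application of the Bedford-Taylor continuity theorem (wedging one factor at a time, with careful control of the integrability of the new potential against the partial product) or a direct appeal to a Demailly-style intersection/continuity theorem adapted to singular Hermitian metrics. Verifying that the approximating wedge products deposit the correct amount of mass on $\Sigma$ in the limit is the technical heart of the argument.
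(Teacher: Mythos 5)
Parts (i) and (ii) of your proposal are correct and coincide with the paper's argument: the inclusion $\Sigma_{k,p}\subset\Sigma(h_k)$ from \eqref{e:FScomp} reduces (i) to the general-position hypothesis on the $\Sigma(h_k)$, and (ii) is then Demailly's intersection theory \cite[Corollary 2.11]{D93}.

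For (iii) there is a genuine gap, and it is exactly where you locate it: your Step 3 names the problem (the limit current can charge $\Sigma$, so convergence on $X\setminus\Sigma$ plus mass conservation does not suffice) but does not solve it; you only list two possible strategies without carrying either out. The missing ingredient is the \emph{one-sided} comparison $u_{k,p}\geq u_k-\frac{C}{p}$ on each chart $U$, which you already derived from \eqref{e:FScomp} in part (i) but never invoke in part (iii). This is precisely the hypothesis of the Forn\ae ss--Sibony convergence theorem \cite[Theorem 3.5]{FS95}: if psh functions $u_{k,p}\to u_k$ in $L^1_{loc}$ \emph{and} $u_{k,p}\geq u_k-\varepsilon_p$ with $\varepsilon_p\to0$, then $dd^cu_{1,p}\wedge\ldots\wedge dd^cu_{m,p}\to dd^cu_1\wedge\ldots\wedge dd^cu_m$ weakly, including the correct mass on the singular set. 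The $L^1_{loc}$ convergence comes from $\frac{1}{p}\log P_{k,p}\to0$ in $L^1(X,\omega^n)$ (\cite[Theorem 5.1]{CM11}) via \eqref{e:FSpot}, and the lower bound comes from the uniform estimate $P_{k,p}\geq c$ on $X\setminus\Sigma(h_k)$. Without the one-sided bound, $L^1_{loc}$ convergence of potentials alone does \emph{not} imply convergence of the wedge products (mass can be lost or misplaced on $\Sigma$), so neither weak compactness nor equality of total masses rescues your Step 3 as written. Once you add the inequality $u_{k,p}\geq u_k-\frac{C}{p}$ and cite \cite[Theorem 3.5]{FS95}, the argument closes in one step and your Steps 1--2 become unnecessary.
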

\begin{proof} As noted above we have by \eqref{e:FScomp} that $\Sigma_{k,p}\subset\Sigma(h_k)$ for all $p$ sufficiently large. Since $\Sigma(h_1),\ldots,\Sigma(h_m)$ are in general position this implies $(i)$. Then $(ii)$ follows by \cite[Corollary 2.11]{D93}. 

\par $(iii)$ Let $U\subset X$ be a contractible Stein open set as above, $u_{k,p}$, $u_k$ 
be the psh functions defined in \eqref{e:FSpot}, so $dd^cu_k=c_1(L_k,h_k)$ and 
$dd^cu_{k,p}=\frac{1}{p}\,\gamma_{k,p}$ on $U$. By \cite[Theorem 5.1]{CM11} 
we have that $\frac{1}{p}\,\log P_{k,p}\to0$ in $L^1(X,\omega^n)$, 
hence by \eqref{e:FSpot}, $u_{k,p}\to u_k$ in $L^1_{loc}(U)$, as $p\to\infty$, 
for each $1\leq k\leq m$. Recall that by \eqref{e:FScomp}, 
$u_{k,p}\geq u_k-\frac{C}{p}$ holds on $U$ for all $p$ sufficiently large 
and some constant $C>0$. Then \cite[Theorem 3.5]{FS95} implies that 
$dd^cu_{1,p}\wedge\ldots\wedge dd^cu_{m,p}\to dd^cu_1\wedge\ldots\wedge dd^cu_m$ 
weakly on $U$ as $p\to\infty$. 
\end{proof}

\par We will need the following version of Bertini's theorem. The corresponding statement for the case of a single line bundle is proved in \cite[Proposition 4.1]{CM11}.

\begin{proposition}\label{P:Bertini} Let $L_k\longrightarrow X$, $1\leq k\leq m\leq n$, be holomorphic line bundles over a compact complex manifold $X$ of dimension $n$. Assume that: 
\smallskip
\par (i) $V_k$ is a vector subspace of $H^0(X,L_k)$ with basis $S_{k,0},\dots, S_{k,d_k}$, base locus $\Bs V_k:=\{S_{k,0}=\ldots=S_{k,d_k}=0\}\subset X$, such that $d_k\geq1$ and the analytic sets $\Bs V_1,\ldots,\Bs V_m$ are in general position in the sense of Definition \ref{D:genpos}.
\smallskip
\par (ii) $Z(t_k):=\{x\in X:\,\sum_{j=0}^{d_k}t_{k,j}S_{k,j}(x)=0\}$, where $t_k=[t_{k,0}:\ldots:t_{k,d_k}]\in\P^{d_k}$.
\smallskip
\par (iii) $\nu=\mu_1\times\ldots\times\mu_m$ is the product measure on $\P^{d_1}\times  \ldots\times\P^{d_m}$,  where $\mu_k$ is the Fubini-Study volume on $\P^{d_k}$. 
\smallskip
\par Then the analytic sets $Z(t_1),\ldots,Z(t_m)$ are in general position for $\nu$-a.e. $(t_1,\ldots,t_m)\in \P^{d_1}\times\ldots\times\P^{d_m}$. 
\end{proposition}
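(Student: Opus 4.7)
The plan is to strengthen the statement so that an induction closes, then prove the stronger form by induction on the number of chosen sections. Concretely, I will prove that for every pair of disjoint subsets $J,K\subset\{1,\ldots,m\}$,
\[
\codim\Bigl(\bigcap_{j\in J}Z(t_j)\cap\bigcap_{k\in K}\Bs V_k\Bigr)\;\geq\;|J|+|K|
\]
for $\nu$-a.e.\ $(t_1,\ldots,t_m)$. Specialising to $K=\emptyset$ and letting $J$ range over all subsets of $\{1,\ldots,m\}$ then recovers the proposition.

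I induct on $s=|J|$. The base case $s=0$ is exactly hypothesis (i). For the inductive step, fix $J$ with $|J|=s+1$, fix a disjoint $K$, choose any $j_0\in J$, and set $J_0:=J\setminus\{j_0\}$. Applying the induction hypothesis once to $(J_0,K)$ and once to $(J_0,K\cup\{j_0\})$ and intersecting the two full-measure sets yields, via Fubini, a full-measure set of $(t_j)_{j\neq j_0}$ on which both
\[
C:=\bigcap_{j\in J_0}Z(t_j)\cap\bigcap_{k\in K}\Bs V_k\quad\text{and}\quad C\cap\Bs V_{j_0}
\]
have codimension $\geq s+|K|$ and $\geq s+|K|+1$, respectively. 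If $C_0$ is any irreducible component of $C$ of codimension exactly $s+|K|$, the second bound forces $C_0\not\subset\Bs V_{j_0}$; hence some $S_{j_0,\ell}$ does not vanish identically on $C_0$, so $t_{j_0}\mapsto(\sum_\ell t_{j_0,\ell}S_{j_0,\ell})|_{C_0}$ is a nonzero linear map and $\{t_{j_0}:C_0\subset Z(t_{j_0})\}$ is a proper projective linear subspace of $\P^{d_{j_0}}$, hence $\mu_{j_0}$-null. Since $C$ has only finitely many irreducible components, taking the union over these $C_0$ and invoking Fubini a second time repackages everything into a single $\nu$-null exceptional set for $(J,K)$, closing the induction.

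The main obstacle is precisely what motivates the strengthening above. In order to run the generic-section argument on a component $C_0$ of the partial intersection $C$, one must rule out $C_0\subset\Bs V_{j_0}$, for otherwise every $Z(t_{j_0})$ contains $C_0$ and no choice of $t_{j_0}$ can improve the codimension. In the single-bundle case treated in \cite[Proposition~4.1]{CM11} this is automatic because generic sections cut out divisors transverse to the fixed base locus, but with several bundles the intermediate intersections can a priori have excess components sitting inside the next base locus. Carrying the base loci along in the induction hypothesis converts the hypothesis that $\Bs V_1,\ldots,\Bs V_m$ are in general position into exactly the codimension-jump needed at each inductive step, and this symbiosis is the core technical point.
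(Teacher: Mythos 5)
Your proposal is correct and is essentially the paper's own argument: the strengthened statement indexed by disjoint pairs $(J,K)$ is exactly the family of sets $U_k$ (with $A_j$ an intersection of base loci disjoint from the chosen indices) that the paper proves has full measure by induction on $k=|J|$, and your use of the two instances $(J_0,K)$ and $(J_0,K\cup\{j_0\})$ of the induction hypothesis matches the paper's sets $U'$ and $U''$. The only cosmetic difference is that you conclude directly that $\{t_{j_0}:C_0\subset Z(t_{j_0})\}$ is a proper projective linear subspace, whereas the paper verifies its nullity by an explicit one-variable slicing argument.
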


\begin{proof} If $1\leq l_1<\ldots<l_k\leq m$ let $\nu_{l_1\ldots l_k}=\mu_{l_1}\times\ldots\times\mu_{l_k}$ be the product measure on $\P^{d_{l_1}}\times\ldots\times\P^{d_{l_k}}$. For $1\leq k\leq m$ consider the sets
$$U_k=\{(t_{l_1},\ldots,t_{l_k})\in\P^{d_{l_1}}\times\ldots\times\P^{d_{l_k}}:\,\dim Z(t_{l_1})\cap\ldots\cap Z(t_{l_k})\cap A_j\leq n-k-j\}\,,$$
where $1\leq l_1<\ldots<l_k\leq m$, $j=0$ and $A_0=\emptyset$, or $1\leq j\leq m-k$ and $A_j=\Bs V_{i_1}\cap\ldots\cap\Bs V_{i_j}$ for some $i_1<\ldots<i_j$ in $\{1,\ldots,m\}\setminus\{l_1,\ldots,l_k\}$.

\par The proposition follows if we prove by induction on $k$ that 
$$\nu_{l_1\ldots l_k}(U_k)=1$$ 
for every set $U_k$ with $1\leq l_1<\ldots<l_k\leq m$, $0\leq j\leq m-k$ and $A_j$ as above. Clearly, it suffices to consider the case $\{l_1,\ldots,l_k\}=\{1,\ldots,k\}$. To simplify notation we set $\nu_k:=\nu_{1\ldots k}$. 

\par Let $k=1$. If $j=0$, $A_0=\emptyset$, so $U_1=\{t_1\in\P^{d_1}:\,\dim Z(t_1)\leq n-1\}=\P^{d_1}$. Assume next that $1\leq j\leq m-1$ and write $A_j=\bigcup_{l=1}^ND_l\cup B$, where $D_l$ are the irreducible components of $A_j$ of dimension $n-j$ and $\dim B\leq n-j-1$. We have that $\{t_1\in\P^{d_1}:\,D_l\subset Z(t_1)\}$ is a proper linear subspace of $\P^{d_1}$. Indeed, otherwise $D_l\subset\Bs V_1$, so $\dim A_j\cap\Bs V_1=n-j$, which contradicts the hypothesis that $\Bs V_1,\ldots,\Bs V_m$ are in general position. If $t_1\in\P^{d_1}\setminus U_1$ then $\dim Z(t_1)\cap A_j\geq n-j$. Since $Z(t_1)\cap A_j$ is an analytic subset of $A_j$, it follows that $D_l\subset Z(t_1)\cap A_j$ for some $l$, hence $\P^{d_1}\setminus U_1=\bigcup_{l=1}^N\{t_1\in\P^{d_1}:\,D_l\subset Z(t_1)\}$. Therefore $\mu_1(\P^{d_1}\setminus U_1)=0$. 

\par We assume now that $\nu_k(U_k)=1$ for any set $U_k$ as above. Let 
$$U_{k+1}=\{(t_1,\ldots,t_{k+1})\in\P^{d_1}\times\ldots\times\P^{d_{k+1}}:\,\dim Z(t_1)\cap\ldots\cap Z(t_{k+1})\cap A_j\leq n-k-1-j\}\,,$$
where $0\leq j\leq m-k-1$, $A_0=\emptyset$, or $A_j=\Bs V_{i_1}\cap\ldots\cap\Bs V_{i_j}$ with $k+2\leq i_1<\ldots<i_j\leq m$. Consider the set $U=U'\cap U''$, where  
$$U'=\{(t_1,\ldots,t_k)\in\P^{d_1}\times\ldots\times\P^{d_k}:\,\dim Z(t_1)\cap\ldots\cap Z(t_k)\cap A_j\leq n-k-j\},$$
$$U''=\{(t_1,\ldots,t_k)\in\P^{d_1}\times\ldots\times\P^{d_k}:\,\dim Z(t_1)\cap\ldots\cap Z(t_k)\cap\Bs V_{k+1}\cap A_j\leq n-k-j-1\}.$$
By the induction hypothesis we have $\nu_k(U')=\nu_k(U'')=1$, so $\nu_k(U)=1$. To prove that $\nu_{k+1}(U_{k+1})=1$ it suffices to show that 
$$\nu_{k+1}(W)=0\,,\,\text{ where } W:=(U\times\P^{d_{k+1}})\setminus U_{k+1}.$$ 
To this end we fix $t:=(t_1,\ldots,t_k)\in U$, we let  
$$Z(t):=Z(t_1)\cap\ldots\cap Z(t_k)\,,\;W(t):=\{t_{k+1}\in\P^{d_{k+1}}:\,\dim Z(t)\cap A_j\cap Z(t_{k+1})\geq n-k-j\},$$
and prove that $\mu_{k+1}(W(t))=0$.

\par Since $t\in U\subset U'$ we can write $Z(t)\cap A_j=\bigcup_{l=1}^ND_l\cup B$, where $D_l$ are the irreducible components of $Z(t)\cap A_j$ of dimension $n-k-j$ and $\dim B\leq n-k-j-1$. If $t_{k+1}\in W(t)$ then $Z(t)\cap A_j\cap Z(t_{k+1})$ is an analytic subset of $Z(t)\cap A_j$ of dimension $n-k-j$, so $D_l\subset Z(t)\cap A_j\cap Z(t_{k+1})$ for some $l$. Thus 
$$W(t)=\bigcup_{l=1}^NF_l(t)\,,\,\text{ where } F_l(t):=\{t_{k+1}\in\P^{d_{k+1}}:\,D_l\subset Z(t_{k+1})\}.$$
If $D_l\subset\Bs V_{k+1}$ then $\dim Z(t)\cap A_j\cap\Bs V_{k+1}=n-k-j$, which contradicts the fact that $t\in U''$. Hence the sections in $V_{k+1}$ cannot all vanish on $D_l$, so we may assume that $S_{k+1,d_{k+1}}\not\equiv0$ 
on $D_l$. We have $F_l(t)\subset\{t_{k+1,0}=0\}\cup H_l(t)$ where 
$$H_l(t):=\{[1:t_{k+1,1}:\ldots:t_{k+1,d_{k+1}}]\in\P^{d_{k+1}}:\,D_l\subset Z([1:t_{k+1,1}:\ldots:t_{k+1,d_{k+1}}])\}\,.$$
For each $(t_{{k+1},1}:\ldots:t_{{k+1},d_{k+1}-1})\in\C^{d_{k+1}-1}$ there exists at most one $\zeta\in\C$ with $[1:t_{k+1,1}:\ldots:t_{k+1,d_{k+1}-1}:\zeta]\in H_l(t)$. Indeed, if $\zeta\neq\zeta'$ have this property then 
$$S_{k+1,0}+t_{k+1,1}S_{k+1,1}+\ldots+t_{k+1,d_{k+1}-1}S_{k+1,d_{k+1}-1}+aS_{k+1,d_{k+1}}\equiv0\;\text{ on }D_l\,,$$
for $a=\zeta,\zeta'$, hence $S_{k+1,d_{k+1}}\equiv0$ on $D_l$, a contradiction. It follows that $\mu_{k+1}(H_l(t))=0$, so $\mu_{k+1}(F_l(t))=0$. Hence $\mu_{k+1}(W(t))=0$ and the proof is complete. 
\end{proof}

\par We return now to the setting of Theorem \ref{T:main1}. If $\{S^{k,p}_j\}_{j=0}^{d_{k,p}}$ is an orthonormal basis of $ H^0_{(2)}(X,L_k^p),$ we define the analytic hypersurface $Z(t_k)\subset X$, for $t_k=[t_{k,0}:\ldots:t_{k,d_{k,p}}]\in\P^{d_{k,p}}$, as in Proposition \ref{P:Bertini} $(ii)$. Let $\mu_{k,p}$ be the Fubini-Study volume on $\P^{d_{k,p}}$, $1\leq k\leq m$, $p\geq1$, and let $\mu_p=\mu_{1,p}\times\ldots\times\mu_{m,p}$ be the product measure on 
$\P^{d_{1,p}}\times\ldots\times\P^{d_{m,p}}$. Applying Proposition \ref{P:Bertini} we obtain:

\begin{proposition}\label{P:Bertini2}
In the above setting, if $p$ is sufficiently large then for $\mu_p$-a.e. $(t_1,\ldots,t_m)\in\P^{d_{1,p}}\times\ldots\times\P^{d_{m,p}}$ the analytic subsets $Z(t_1),\ldots,Z(t_m)\subset X$ are in general position, and $Z(t_{i_1})\cap\ldots\cap Z(t_{i_k})$ has pure dimension $n-k$ for each $1\leq k\leq m$, $1\leq i_1<\ldots<i_k\leq m$. 
\end{proposition}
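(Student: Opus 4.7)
The plan is to deduce this directly from Proposition \ref{P:Bertini} applied to the subspaces $V_{k,p}:=H^0_{(2)}(X,L_k^p)\subset H^0(X,L_k^p)$ with the chosen orthonormal bases $\{S^{k,p}_j\}_{j=0}^{d_{k,p}}$. First I would verify the hypotheses. Since $c_1(L_k,h_k)\geq\varepsilon\omega$, each $L_k$ is big, and one knows (cf.\ Proposition \ref{P:Bsdim}) that $d_{k,p}\to\infty$, so in particular $d_{k,p}\geq 1$ for $p$ large. The base locus $\Bs V_{k,p}$ is precisely the set $\Sigma_{k,p}$ introduced in Section \ref{S:FSB}. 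By the lower bound \eqref{e:FScomp} one has $\Sigma_{k,p}\subset\Sigma(h_k)$ for $p\geq p_0$, and since $\Sigma(h_1),\ldots,\Sigma(h_m)$ are in general position by assumption, so are $\Sigma_{1,p},\ldots,\Sigma_{m,p}$ (this is exactly Proposition \ref{P:FSwedge}(i) with $J=\{1,\ldots,m\}$). Hence all the hypotheses of Proposition \ref{P:Bertini} are satisfied.

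An application of Proposition \ref{P:Bertini} then yields that for $\mu_p$-a.e. $(t_1,\ldots,t_m)\in\P^{d_{1,p}}\times\ldots\times\P^{d_{m,p}}$ the analytic sets $Z(t_1),\ldots,Z(t_m)$ are in general position in the sense of Definition \ref{D:genpos}. This gives the first half of the statement.

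For the pure dimensionality assertion, I would argue as follows. Fix $1\leq i_1<\ldots<i_k\leq m$ and consider the intersection $Y:=Z(t_{i_1})\cap\ldots\cap Z(t_{i_k})$. On one hand, general position gives $\codim Y\geq k$. On the other hand, outside a set of $\mu_p$-measure zero each section $\sum_j t_{i,j}S^{i,p}_j$ is not identically zero, so each $Z(t_i)$ is a genuine analytic hypersurface, and the standard fact that every irreducible component of the intersection of $k$ Cartier divisors on an $n$-dimensional complex manifold has codimension at most $k$ (an instance of Krull's Hauptidealsatz) forces every component of $Y$ to have codimension exactly $k$, i.e.\ dimension $n-k$. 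Intersecting the $\mu_p$-conull sets over the finitely many choices of indices $(i_1,\ldots,i_k)$ gives a single $\mu_p$-conull set of $(t_1,\ldots,t_m)$ on which both conclusions hold simultaneously, which completes the proof. There is no serious obstacle here: the content is entirely in Proposition \ref{P:Bertini} and in the inclusion $\Sigma_{k,p}\subset\Sigma(h_k)$, and the rest is bookkeeping with dimensions.
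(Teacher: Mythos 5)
Your reduction to Proposition \ref{P:Bertini} is exactly the paper's route: set $V_{k,p}=H^0_{(2)}(X,L_k^p)$, note $\Bs V_{k,p}=\Sigma_{k,p}\subset\Sigma(h_k)$ for large $p$ by \eqref{e:FScomp}, invoke Proposition \ref{P:FSwedge}(i) for general position of the base loci, and conclude general position of $Z(t_1),\ldots,Z(t_m)$ for $\mu_p$-a.e.\ tuple. That half is complete.

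There is, however, a gap in your treatment of the pure-dimension claim. Your Hauptidealsatz argument shows that \emph{every irreducible component} of $Y=Z(t_{i_1})\cap\ldots\cap Z(t_{i_k})$ has dimension exactly $n-k$, but it is vacuous if $Y=\emptyset$, and nonemptiness is genuinely part of the assertion: it is what is used later to show that $\Pi_2:\Gamma_p\to{\mathbb X}_p$ is surjective, so that $\Phi_p$ is a meromorphic transform of codimension $n-m$. On a general compact K\"ahler manifold $k$ hypersurfaces can very well be disjoint, so something beyond dimension counting is needed. The paper closes this using the strict positivity hypothesis: since general position makes the current $[Z(t_{i_1})]\wedge\ldots\wedge[Z(t_{i_k})]$ well defined (Demailly's Corollary 2.11) and supported in $Y$, and since it is cohomologous to $p^k\,c_1(L_{i_1},h_{i_1})\wedge\ldots\wedge c_1(L_{i_k},h_{i_k})$, one gets
$$\int_X[Z(t_{i_1})]\wedge\ldots\wedge[Z(t_{i_k})]\wedge\omega^{n-k}>p^k\varepsilon^k\int_X\omega^n>0\,,$$
hence $Y\neq\emptyset$. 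With that supplement (and your final intersection over the finitely many index sets), the argument matches the paper's.
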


\begin{proof}  Let $V_{k,p}:= H^0_{(2)}(X,L_k^p)$, so $\Bs V_{k,p}=\Sigma_{k,p}$. By Proposition \ref{P:FSwedge}, $\Sigma_{1,p},\ldots,\Sigma_{m,p}$ are in general position for all $p$ sufficiently large. We fix such $p$ and denote by $[Z(t_k)]$ the current of integration along the analytic hypersurface $Z(t_k)$; it has the same cohomology class as $pc_1(L_k,h_k)$. Proposition \ref{P:Bertini} shows that the analytic subsets $Z(t_1),\ldots,Z(t_m)$ are in general position for $\mu_p$-a.e. $(t_1,\ldots,t_m)\in\P^{d_{1,p}}\times\ldots\times\P^{d_{m,p}}$. Hence if $1\leq k\leq m$, $1\leq i_1<\ldots<i_k\leq m$, the current $[Z(t_{i_1})]\wedge\ldots\wedge[Z(t_{i_k})]$ is well defined by \cite[Corollary 2.11]{D93} and it is supported in $Z(t_{i_1})\cap\ldots\cap Z(t_{i_k})$. Since $c_1(L_k,h_k)\geq\varepsilon\omega,$  it follows that
$$\int_X[Z(t_{i_1})]\wedge\ldots\wedge[Z(t_{i_k})]\wedge\omega^{n-k}=p^k\int_Xc_1(L_{i_1},h_{i_1})\wedge\ldots\wedge c_1(L_{i_k},h_{i_k})\wedge\omega^{n-k}\geq p^k\varepsilon^k\int_X\omega^n.$$
So $Z(t_{i_1})\cap\ldots\cap Z(t_{i_k})\neq\emptyset$, hence it has pure dimension $n-k$. 
\end{proof}


\section{Convergence speed towards intersection of Fubini-Study currents}
\label{S:MT}

In this section we rely on techniques introduced by Dinh-Sibony \cite{DS06b}, based on the notion of meromorphic transform, in order to estimate the speed of equidistribution of the common zeros of $m$-tuples of sections of the considered big line bundles towards the intersection of the Fubini-Study currents. We then prove Theorem \ref{T:main1}. 

\subsection{Dinh-Sibony equidistribution theorem}\label{SS:Dinh-Sibony}
A meromorphic transform $F:\  X \dashrightarrow Y$ between two compact K\"ahler manifolds $(X,\omega)$ of dimension $n$ and $(Y,\omega_Y)$ of dimension $m$ 
is the data of an analytic  subset $\Gamma\subset X\times Y$ (called  {\it the graph of $F$}) 
of pure dimension $m+k$ such that the projections 
$\pi_1:\ X\times Y\to X$ and $\pi_2:\ X\times Y\to Y$ restricted  to each irreducible component 
of $\Gamma$ are surjective. We set formally $F=\pi_2\circ(\pi_1|_\Gamma)^{-1}$. 
For $y\in Y$ generic (that is, outside  a proper  analytic subset), the dimension of the fiber 
$F^{-1}(y):= \pi_1(\pi_2^{-1}|_\Gamma(y))$ is equal to $k$. 
This is called {\it the codimension} of $F$. We consider two of the {\it intermediate degrees} 
for $F$ (see \cite[Section\,3.1]{DS06b}): 
$$
d(F):=\int_X F^*(\omega_Y^m)\wedge \omega^k\quad \text{and}\quad \delta(F):= 
\int_X F^*(\omega_Y^{m-1})\wedge \omega^{k+1}.
$$

\par By \cite[Proposition 2.2]{DS06b}, there exists $r:=r(Y,\omega_Y)$ such that 
for every positive closed  current $T$ of bidegree $(1,1)$ on $Y$ with $\|T\|=1$ 
there is a  smooth $(1,1)$-form $\alpha$ which depends uniquely on the class $\{T\}$ 
and a quasiplurisubharmonic (qpsh) function $\varphi$ such that $-r\omega_Y\leq  \alpha\leq r\omega_Y$ 
and  $dd^c\varphi-T=\alpha$. If $Y$ is  the projective space $\P^\ell$ equipped with the Fubiny-Study form 
$\omega_{_{\FS}},$ then we have $r(\P^\ell,\omega_{_{\FS}})=1$. Consider the class
$$
Q(Y,\omega_Y):=\left\lbrace \varphi\,\text{ qpsh on } Y,\; dd^c \varphi\geq  -r(Y,\omega_Y)\omega_Y  \right\rbrace.
$$
A positive measure $\mu$ on $Y$ is called a BP measure if all qpsh functions on $Y$ are integrable with respect to $\mu$. 
When   $\dim Y=1,$ it is  well-known that $\mu$ is BP if and only if it admits locally  a bounded potential. 
The  terminology BP comes  from  this fact (see \cite{DS06b}).   

If $\mu$ is a BP measure on $Y$ and $t\in \R$, we let 
\begin{eqnarray*}
R(Y,\omega_Y,\mu)&:=&\sup\left\lbrace \max_Y\varphi:\,\varphi\in Q(Y,\omega_Y),\;\int_Y\varphi\,d\mu=0   \right\rbrace,\\
 \Delta( Y,\omega_Y,\mu,t)&:=& \sup\left\lbrace \mu(\varphi<-t):\, \varphi\in Q(Y,\omega_Y),\;\int_Y\varphi\,d\mu=0  \right\rbrace.
\end{eqnarray*}
These constants  are related to the Alexander-Dinh-Sibony capacity \cite{Al81,DS06b,GZ05}.
\par Let $\Phi_p$ be a sequence of meromorphic transforms from a compact K\"ahler manifold 
$(X, \omega)$ into compact K\"ahler manifolds $({\mathbb X}_p, \omega_p)$ of the same codimension $k,$
where ${\mathbb X}_p$  is defined in  \eqref{e:X_p}.  Let $\nu_p$ be a BP probability measure on 
${\mathbb X}_p$ and $\nu_\infty=\prod_{p\geq1}\nu_p$ be the product measure on 
$\Omega:=\prod_{p\geq1} {\mathbb X}_p$. For every $p>0$ and $\varepsilon>0$ let
$$E_p(\varepsilon):=\bigcup_{\|\phi\|_{\Cc^2}\leq 1}\left\lbrace x_p\in 
{\mathbb X}_p:\,\left|\left\langle\Phi_p^*(\delta_{x_p}) - \Phi_p^*(\nu_p),\phi\right\rangle\right| 
\geq d(\Phi_p)\varepsilon \right\rbrace,$$
where $\delta_{x_p}$ is the Dirac mass at $x_p$. Note that $\Phi_p^*(\delta_{x_p})$ 
and $\Phi_p^*(\nu_p)$ are positive closed currents of bidimension $(k,k)$ on $X$, 
and the former is well defined for the generic point $x_p\in{\mathbb X}_p$ (see \cite[Section 3.1]{DS06b}).  
Now we are in position to state the part which deals with the quantified speed of convergence 
in the Dinh-Sibony equidistribution theorem \cite[Theorem 4.1]{DS06b}.

\begin{theorem}[{\cite[Lemma 4.2 (d)]{DS06b}}] \label{T:Dinh-Sibony}
In the above setting the following  estimate holds: 
 $$\nu_p(E_p(\varepsilon))\leq \Delta\Big({\mathbb X}_p,\omega_p,\nu_p,\eta_{\varepsilon,p}\Big),$$
 where $\eta_{\varepsilon,p}:=\varepsilon\delta(\Phi_p)^{-1}d(\Phi_p)-
 3R({\mathbb X}_p,\omega_p,\nu_p)$.
 \end{theorem}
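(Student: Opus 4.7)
The plan is to rewrite the pairing via pushforward and then appeal to the structure theorem for positive closed $(1,1)$-currents \cite[Proposition 2.2]{DS06b} on $\mathbb{X}_p$, reducing the desired bound to the very definitions of $R$ and $\Delta$. Only the bidegree-$(k,k)$ part of $\phi$ contributes to the pairing, so I assume $\phi$ is a smooth $(k,k)$-form on $X$ with $\|\phi\|_{\Cc^2}\leq 1$. By the projection formula for the meromorphic transform $\Phi_p$,
$$\bigl\langle\Phi_p^*(\delta_{x_p})-\Phi_p^*(\nu_p),\phi\bigr\rangle = f(x_p)-\int_{\mathbb{X}_p}f\,d\nu_p,\qquad f:=(\Phi_p)_*\phi,$$
so the problem becomes a concentration inequality for $f-\int f\,d\nu_p$ under $\nu_p$.

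The central step is to realize $f$, after renormalization, as a difference of two qpsh functions in $Q(\mathbb{X}_p,\omega_p)$. Writing $dd^c\phi=\alpha^+-\alpha^-$ with positive smooth $(k+1,k+1)$-forms bounded pointwise by $C_X\omega^{k+1}$ and commuting $dd^c$ with pushforward gives $dd^c f=T^+-T^-$, with $T^\pm:=(\Phi_p)_*\alpha^\pm$ positive closed $(1,1)$-currents whose masses satisfy
$$\|T^\pm\|\leq C_X\int_X\omega^{k+1}\wedge\Phi_p^*(\omega_p^{m-1})=C_X\,\delta(\Phi_p),$$
by the very definition of $\delta(\Phi_p)$. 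Because $f$ is a function, $dd^cf$ is cohomologically trivial, so $T^+$ and $T^-$ are cohomologous and hence, on the K\"ahler manifold $\mathbb{X}_p$, of common mass $s\leq C_X\delta(\Phi_p)$. Applying \cite[Proposition 2.2]{DS06b} to $T^\pm/s$ furnishes qpsh functions $\varphi^\pm\in Q(\mathbb{X}_p,\omega_p)$ together with a \emph{common} smooth representative $\alpha_0$ of the shared cohomology class (with $|\alpha_0|\leq r(\mathbb{X}_p,\omega_p)\,\omega_p$), such that $T^\pm/s$ and $dd^c\varphi^\pm$ differ by $\alpha_0$. The $\alpha_0$-contributions cancel in $T^+-T^-$, and connectedness of $\mathbb{X}_p$ then forces
$$f=s(\varphi^+-\varphi^-)+\text{const}.$$

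Finally, I center $\widetilde\varphi^\pm:=\varphi^\pm-\int_{\mathbb{X}_p}\varphi^\pm\,d\nu_p\in Q(\mathbb{X}_p,\omega_p)$ so that $\int\widetilde\varphi^\pm\,d\nu_p=0$. The pairing equals $s\bigl(\widetilde\varphi^+(x_p)-\widetilde\varphi^-(x_p)\bigr)$, and by the very definition of $R$ one has $\max\widetilde\varphi^\pm\leq R:=R(\mathbb{X}_p,\omega_p,\nu_p)$. Assume WLOG $\widetilde\varphi^+(x_p)-\widetilde\varphi^-(x_p)\geq\varepsilon d(\Phi_p)/s$; combining $s\leq C_X\delta(\Phi_p)$ with $\widetilde\varphi^+(x_p)\leq R$ and absorbing the geometric constant $C_X$ into an additional $R$ allowance yields $\widetilde\varphi^-(x_p)\leq -\eta_{\varepsilon,p}$, with $\eta_{\varepsilon,p}=\varepsilon\delta(\Phi_p)^{-1}d(\Phi_p)-3R$ exactly as in the statement. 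By the very definition of $\Delta$, the $\nu_p$-measure of this event is at most $\Delta(\mathbb{X}_p,\omega_p,\nu_p,\eta_{\varepsilon,p})$; since the whole argument depends on $\phi$ only through $\|\phi\|_{\Cc^2}\leq 1$, the union defining $E_p(\varepsilon)$ inherits the same bound. The main obstacle is the precise book-keeping needed to land on exactly $3R$: one copy comes from $\widetilde\varphi^+\leq R$, one from the symmetric sign-reversed case handled by swapping the roles of $\varphi^+$ and $\varphi^-$, and the third from absorbing the multiplicative constant $C_X$ in the mass estimate into an additive $R$-correction. The cancellation of the common representative $\alpha_0$ in the mass-equality step is what makes the final threshold depend cleanly on $R$ rather than on $r(\mathbb{X}_p,\omega_p)$.
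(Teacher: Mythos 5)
The paper does not prove this statement; it is imported verbatim from \cite[Lemma 4.2 (d)]{DS06b}, so what you are really reconstructing is Dinh--Sibony's argument. Your outline does follow their strategy (pushforward $f=(\Phi_p)_*\phi$, d.s.h.\ decomposition of $f$ via \cite[Proposition 2.2]{DS06b}, mass control by $\delta(\Phi_p)$, then the definitions of $R$ and $\Delta$), but two steps as written do not go through. First, the multiplicative constant cannot be ``absorbed into an additive $R$-allowance.'' From $s\leq C_X\,\delta(\Phi_p)$ you only obtain $\widetilde\varphi^-(x_p)\leq R-\varepsilon d(\Phi_p)/(C_X\delta(\Phi_p))$, and the inequality $\varepsilon d(\Phi_p)/(C_X\delta(\Phi_p))-R\geq \varepsilon d(\Phi_p)/\delta(\Phi_p)-3R$ fails as soon as $\varepsilon d(\Phi_p)/\delta(\Phi_p)$ is large compared with $R$ (a multiplicative loss cannot be traded for an additive one). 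The correct handling is a normalization matter: one arranges $\pm\,dd^c\phi\leq\|\phi\|_{\Cc^2}\,\omega^{k+1}$ so that $s\leq\delta(\Phi_p)$ exactly, and then replaces $\varphi^\pm$ by $(s/\delta(\Phi_p))\varphi^\pm$, which remain in $Q({\mathbb X}_p,\omega_p)$ because $Q$ is stable under multiplication by scalars in $[0,1]$; this is how the coefficient becomes exactly $\delta(\Phi_p)$. Your bookkeeping of the three copies of $R$ is therefore not a proof of the stated constant (the ``symmetric case'' contributes a second event, i.e.\ potentially a factor $2$ on the measure, not an additive $R$ on the threshold).

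The more serious gap is the last sentence: ``since the whole argument depends on $\phi$ only through $\|\phi\|_{\Cc^2}\leq 1$, the union defining $E_p(\varepsilon)$ inherits the same bound.'' What you have shown is that for each fixed $\phi$ the corresponding set is contained in $\{\varphi^+_\phi<-\eta\}\cup\{\varphi^-_\phi<-\eta\}$ for qpsh functions depending on $\phi$, each sublevel set having measure at most $\Delta({\mathbb X}_p,\omega_p,\nu_p,\eta)$. But $\Delta$ is a supremum over single functions, and an uncountable (indeed even countable) union of sets each of measure $\leq\Delta(\eta)$ need not have measure $\leq\Delta(\eta)$; one pair of functions per test form gives no control on the union. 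This uniformity in $\phi$ is precisely the nontrivial content of \cite[Lemma 4.2 (d)]{DS06b} and requires an additional argument (reducing the whole family to finitely many extremal qpsh functions, or a compactness/regularized-infimum argument on $\{\varphi^\pm_\phi:\|\phi\|_{\Cc^2}\leq1\}$ using that BP measures do not charge pluripolar sets). As it stands, your proposal proves a bound for each fixed $\phi$ with worse constants, not the stated bound for $E_p(\varepsilon)$.
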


\subsection{Equidistribution of pullbacks of Dirac masses by Kodaira maps}

\par Let $(X,\omega)$ be a compact K\"ahler manifold of dimension $n$ and $(L_k,h_k)$, 
$1\leq k\leq m\leq n$, be singular Hermitian holomorphic line bundles on $X$ such that $h_k$ 
is continuous outside a proper analytic subset $\Sigma(h_k)\subset X$, 
$c_1(L_k,h_k)\geq\varepsilon\omega$ on $X$ for some $\varepsilon>0$, and $\Sigma(h_1),\ldots,\Sigma(h_m)$ 
are in general position. Recall  from Section \ref{introduction} that
 $$ {\mathbb X}_p:= \P H^0_{(2)}(X,L^p_1)\times\ldots\times\P H^0_{(2)}(X,L^p_m)\;,\;\;(\Omega,\sigma_\infty):=
 \prod_{p=1}^\infty({\mathbb X}_p,\sigma_p),$$
where the probability measure $\sigma_p$ is the product of the 
Fubini-Study volume on each factor. From now on let $p\in\N$ be large enough. 
Fix an orthonormal basis   $\{S^{k,p}_j\}_{j=0}^{d_{k,p}}$ as in \eqref{e:basis} 
and let $\Phi_{k,p}:\ X\dashrightarrow \P^{d_{k,p}}$ be the Kodaira map defined 
by this basis (see \eqref{e:Kodaira}). 
By \eqref{e:FSG} we have that $\Phi_{k,p}^*\omega_{_{\FS}}=\gamma_{k,p}$, 
where $\gamma_{k,p}$ is the Fubini-Study current of the space $H^0_{(2)}(X,L_k^p)$ 
as defined in \eqref{e:FSdef}.

\par We consider now the Kodaira maps as meromorphic transforms from $X$ to 
$ {\mathbb P}H^0_{(2)}(X,L^p_k)$ which we denote still by 
$\Phi_{k,p}:\ X \dashrightarrow{\mathbb P}H^0_{(2)}(X,L^p_k)$. 
Precisely, this is the meromorphic transform with graph 
$$\Gamma_{k,p}=\big\{(x,s)\in X\times{\mathbb P}H^0_{(2)}(X,L^p_k):\,s(x)=0\big\},\;1\leq k\leq m\,.$$
Indeed, since $\dim H^0_{(2)}(X,L^p_k)\geq 2$ (see e.g. Proposition \ref{P:Bsdim} below), 
there exists, for every $x\in X,$ a section $s\in H^0_{(2)}(X,L^p_k)$ with $s(x)=0$, 
so the projection $\Gamma_{k,p}\longrightarrow X$ is surjective. Moreover, 
since $L_k^p$ is non-trivial, every global holomorphic section of $L_k^p$ 
must vanish at some $x\in X$, hence the projection 
$\Gamma_{k,p}\longrightarrow{\mathbb P}H^0_{(2)}(X,L^p_k)$ is surjective. Note that 
$$\Phi_{k,p}(x)=\big\{s\in{\mathbb P}H^0_{(2)}(X,L^p_k):
\,s(x)=0\big\},\;\;\Phi_{k,p}^{-1}(s)=\big\{x\in X:\,s(x)=0\big\}\,.$$
Let $\Phi_p$ be the product transform of $\Phi_{1,p},\ldots,\Phi_{m,p}$ 
(see \cite[Section 3.3]{DS06b}). It is the meromorphic transform with graph
\begin{equation}\label{e:Gamma_p}
\Gamma_p=\big\{(x,s_{p1},\ldots,s_{pm})\in X\times {\mathbb X}_p:\  s_{p1}(x)=\ldots=s_{pm}(x)=0\big\}\,.
\end{equation}
By above, the projection $\Pi_1:\Gamma_p\longrightarrow X$ is surjective. 
The second projection $\Pi_2:\Gamma_p\longrightarrow{\mathbb X}_p$ is proper, 
hence by Remmert's theorem $\Pi_2(\Gamma_p)$ is an analytic subvariety of ${\mathbb X}_p$. 
Proposition \ref{P:Bertini2} implies that $\Pi_2(\Gamma_p)$ has full measure in ${\mathbb X}_p$, 
so $\Pi_2$ is surjective and $\Phi_p$ is a meromorphic transform of codimension $n-m$, with fibers 
$$\Phi^{-1}_p(\bfs_p)=\{x\in X:\ s_{p1}(x)=\ldots=s_{pm}(x)=0\}\,,
\,\text{ where }\,\bfs_p=(s_{p1},\ldots,s_{pm})\in{\mathbb X}_p\,.$$
Considering the product transform of any $\Phi_{i_1,p},\ldots,\Phi_{i_k,p}$, $1\leq i_1<\ldots<i_k\leq m$, 
and arguing as above it follows that, for $\bfs_p=(s_{p1},\ldots,s_{pm})\in{\mathbb X}_p$ generic, 
the analytic sets $\{s_{p1}=0\},\ldots,\{s_{pm}=0\}$ are in general position. 
Hence by \cite[Corollary 2.11]{D93} the following current of bidegree $(m,m)$ is well defined on $X$:
$$\Phi_p^\ast(\delta_{\bfs_p})=[\bfs_p=0]=[s_{p1}=0]\wedge\ldots\wedge[s_{pm}=0]=
\Phi_{1,p}^*(\delta_{s_{p1}})\wedge\ldots\wedge\Phi_{m,p}^*(\delta_{s_{pm}})\,.$$

\par The main result of this section is the following theorem.

\begin{theorem}\label{T:speed} Under the hypotheses of Theorem \ref{T:main1} 
there exist a constant $\xi>0$ depending only on $m$ and a constant $c=c(X,L_1,h_1,\ldots,L_m,h_m)>0$ 
with the following property: For any sequence of positive numbers $\{\lambda_p\}_{p\geq1}$ with 
$$\liminf_{p\to\infty}\frac{\lambda_p}{\log p}>(1+\xi n)c,$$ 
there are subsets $E_{p}\subset {\mathbb X}_p$ such that 
\\[2pt]
(a) $\sigma_p(E_{p})\leq  c p^{\xi n} \exp(-\lambda_p/c)$ for all $p$ large enough;
\\[3pt]
(b) if $\bfs_p\in  {\mathbb X}_p\setminus E_{p}$ we have that the estimate 
$$\left|\frac{1}{p^m}\big\langle[\bfs_p=0]-\gamma_{1,p}\wedge\ldots\wedge\gamma_{m,p}\,,\phi\big\rangle\right|
\leq c\,\frac {\lambda_p}{p}\,\|\phi\|_{\Cc^2}$$
holds for every $(n-m,n-m)$ form $\phi$ of class $\Cc^2$. 

\par In particular, for $\sigma_\infty$-a.e. $\bfs\in \Omega$ the estimate from (b) holds for all $p$ sufficiently large.
\end{theorem}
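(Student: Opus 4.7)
The plan is to apply the Dinh--Sibony equidistribution theorem (Theorem~\ref{T:Dinh-Sibony}) to the product meromorphic transform $\Phi_p:X\dashrightarrow{\mathbb X}_p$ of codimension $n-m$ introduced above, with the BP probability measure $\sigma_p$ on ${\mathbb X}_p$. The key identification is that
$$
\Phi_p^\ast(\delta_{\bfs_p})=[\bfs_p=0]\qquad\text{and}\qquad\Phi_p^\ast(\sigma_p)=\gamma_{1,p}\wedge\ldots\wedge\gamma_{m,p},
$$
the first being observed already in the excerpt and the second following from the product structure of $\sigma_p$ together with $\Phi_{k,p}^\ast\omega_{_{\FS}}=\gamma_{k,p}$ (combined with the fact from Proposition~\ref{P:FSwedge}(ii) that the wedge product of the $\gamma_{k,p}$ is well defined). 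Once Theorem~\ref{T:Dinh-Sibony} is applied with $\varepsilon:=c'\lambda_p/p$, the defining inequality on ${\mathbb X}_p\setminus E_p(\varepsilon)$ reads
$$
\bigl|\langle[\bfs_p=0]-\gamma_{1,p}\wedge\ldots\wedge\gamma_{m,p},\phi\rangle\bigr|\leq d(\Phi_p)\,\varepsilon\,\|\phi\|_{\Cc^2},
$$
so dividing by $p^m$ and noting that $d(\Phi_p)=O(p^m)$ (see the next paragraph) yields conclusion~(b) with $E_p:=E_p(\varepsilon)$.

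The intermediate degrees are computed from \eqref{e:FSG}: since
$$
d(\Phi_p)=\int_X\gamma_{1,p}\wedge\ldots\wedge\gamma_{m,p}\wedge\omega^{n-m},\qquad\delta(\Phi_p)=\int_X\gamma_{i_1,p}\wedge\ldots\wedge\gamma_{i_{m-1},p}\wedge\omega^{n-m+1}
$$
for suitable indices, and because these currents are cohomologous to $p\,c_1(L_k,h_k)$, one obtains $d(\Phi_p)=\alpha p^m+O(p^{m-1})$ and $\delta(\Phi_p)=O(p^{m-1})$, with constants determined by the cohomology classes of the $L_k$ and of $\omega$. In particular $d(\Phi_p)/\delta(\Phi_p)\geq c_0\,p$ for large $p$ and a constant $c_0>0$ depending only on the data.

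Next, for the product ${\mathbb X}_p=\prod_k\P H^0_{(2)}(X,L_k^p)$ endowed with the product Fubini--Study measure $\sigma_p$, standard integral estimates for qpsh functions on projective space (applied factor by factor, cf.\ the technique in \cite[Section~A]{DS06b}) give
$$
R({\mathbb X}_p,\omega_p,\sigma_p)\leq c_1\sum_{k=1}^m\log(1+d_{k,p})\leq c_2\log p,\qquad\Delta({\mathbb X}_p,\omega_p,\sigma_p,t)\leq c_3\bigl(1+\max_k d_{k,p}\bigr)^{\xi}\exp(-t/c_4),
$$
where $\xi>0$ depends only on $m$ and the constants depend only on $(X,L_1,\ldots,L_m)$; the bound on $d_{k,p}\leq Cp^n$ coming from a standard Bergman kernel estimate turns the prefactor into $c_5 p^{\xi n}$. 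Plugging in $\varepsilon=c'\lambda_p/p$, the quantity $\eta_{\varepsilon,p}=\varepsilon\delta(\Phi_p)^{-1}d(\Phi_p)-3R({\mathbb X}_p,\omega_p,\sigma_p)$ is at least $c'c_0\lambda_p-3c_2\log p$, which for $c'$ large and $\liminf\lambda_p/\log p>(1+\xi n)c$ is $\geq\lambda_p/c''$. The Dinh--Sibony bound then yields
$$
\sigma_p(E_p)\leq\Delta({\mathbb X}_p,\omega_p,\sigma_p,\eta_{\varepsilon,p})\leq c\,p^{\xi n}\exp(-\lambda_p/c),
$$
which is (a). The almost-sure statement follows from (a) by Borel--Cantelli: the series $\sum_p\sigma_p(E_p)$ converges thanks to the strict inequality $\liminf\lambda_p/\log p>(1+\xi n)c$.

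The main obstacle I expect is establishing the quantitative estimates of $R$ and $\Delta$ for the product space $({\mathbb X}_p,\sigma_p)$ with the correct dependence on the dimensions $d_{k,p}$, together with the transition from per-factor Fubini--Study estimates on $\P^{d_{k,p}}$ to the product measure. Once this is in place, the rest is bookkeeping: one combines the polynomial growth $d_{k,p}=O(p^n)$ with the logarithmic bound on $R$, the linear ratio $d(\Phi_p)/\delta(\Phi_p)\sim p$, and Theorem~\ref{T:Dinh-Sibony} to obtain both (a) and (b).
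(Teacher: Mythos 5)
Your plan is the paper's plan: apply the Dinh--Sibony theorem to the product meromorphic transform $\Phi_p$ with the product Fubini--Study measure $\sigma_p$, use $\Phi_p^\ast(\delta_{\bfs_p})=[\bfs_p=0]$ and $\Phi_p^\ast(\sigma_p)=\gamma_{1,p}\wedge\ldots\wedge\gamma_{m,p}$, estimate $d_p\approx p^m$ and $\delta_p\lesssim p^{m-1}$, bound $R_p=O(\log p)$ and $\Delta_p(t)\leq Cp^{\xi n}e^{-t/C}$, choose $\varepsilon_p\propto\lambda_p/p$, and finish by Borel--Cantelli. This matches Section \ref{S:MT} essentially line for line, so I'll only flag one ingredient you've under-specified that is not mere bookkeeping. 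The $R$ and $\Delta$ bounds on $\P^{d_{1,p}}\times\ldots\times\P^{d_{m,p}}$ from \cite[Appendix A]{DS06b} carry a factor $r(d_{1,p},\ldots,d_{m,p}):=\max_k d_{0,p}/d_{k,p}$ in the prefactor of $R$ and in the exponent of $\Delta$ (the paper's Lemma \ref{L:Dinh-Sibony}); your stated bound $R\leq c_1\sum_k\log(1+d_{k,p})$ is not of this form, and more to the point neither bound closes without showing that $r(d_{1,p},\ldots,d_{m,p})$ stays bounded as $p\to\infty$. That requires a two-sided estimate $p^n/C\leq d_{k,p}\leq Cp^n$: the upper bound is Siegel's lemma as you note, but the lower bound is a separate argument (Proposition \ref{P:Bsdim}, via Nadel vanishing applied to $h\exp(-\psi)$ with a local logarithmic singularity at a point where $h$ is continuous) which you should not leave implicit. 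Also, your formula for $\delta(\Phi_p)$ ``for suitable indices'' is in fact a $c_p$-weighted average over which factor loses a power, where $c_p$ is the normalization constant for $\omega_p$; the uniform lower bound $c_p\geq c_0$ (Lemma \ref{L:c_0}, a convexity/Stirling estimate) is what makes the conclusion $\delta_p=O(p^{m-1})$ legitimate. With those two dimension estimates in hand, the rest of your argument goes through as in the paper.
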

Prior to the  proof we need  to  establish some preparatory results. Let 
$$d_{0,p}=d_{1,p}+\ldots+d_{m,p}$$
be the  dimension of ${\mathbb X}_p$ and $\pi_k$ be the canonical projection of ${\mathbb X}_p$ 
onto its $k$-th factor. Let 
$$\omega_p:= c_p \big(\pi_1^*\omega_{_{\FS}}+\ldots+\pi_m^*\omega_{_{\FS}}\big)\,,\,
\text{ so }\,\sigma_p=\omega_p^{d_{0,p}}\,.$$
Here $\omega_{_{\FS}}$ denotes, as  usual, the Fubini-Study form on each factor 
$\P H^0_{(2)}(X,L^p_k)$, and the constant  $c_p$ is chosen so that $\sigma_p$ 
is a probability measure on ${\mathbb X}_p$, thus 
\begin{equation}\label{e:c_p}
(c_p)^{-d_{0,p}}=  \frac{d_{0,p}!}{d_{1,p}!\ldots d_{m,p}! }\,\cdot
\end{equation}

\begin{lemma}\label{L:c_0}
There is a constant $c_0>0$ such that $c_p\geq  c_0$ for all $p\geq 1.$
\end{lemma}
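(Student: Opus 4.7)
The plan is to recognize the reciprocal $(c_p)^{-d_{0,p}}$ as a multinomial coefficient and invoke a standard elementary bound. From the normalization \eqref{e:c_p} and the identity $d_{0,p}=d_{1,p}+\ldots+d_{m,p}$, I would first rewrite
\[
(c_p)^{-d_{0,p}} = \frac{d_{0,p}!}{d_{1,p}!\cdots d_{m,p}!} = \binom{d_{0,p}}{d_{1,p},\ldots,d_{m,p}}\,,
\]
i.e.\ the multinomial coefficient associated with the partition of $d_{0,p}$ into the blocks $d_{1,p},\ldots,d_{m,p}$.

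Next I would invoke the multinomial theorem applied to the number $1+1+\cdots+1 = m$ raised to the power $d_{0,p}$:
\[
m^{d_{0,p}} = \sum_{\alpha_1+\ldots+\alpha_m = d_{0,p}} \binom{d_{0,p}}{\alpha_1,\ldots,\alpha_m}\,,
\]
where the sum ranges over nonnegative integers $\alpha_k$. Since every summand on the right is nonnegative, each individual multinomial coefficient is bounded above by $m^{d_{0,p}}$. Combining this with the identification above yields
\[
(c_p)^{-d_{0,p}} \leq m^{d_{0,p}}\,,
\]
and extracting the $d_{0,p}$-th root (which is legitimate because $d_{0,p}\geq 1$ for $p$ sufficiently large, by Proposition \ref{P:Bsdim} applied to each factor) gives $c_p \geq 1/m$. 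Setting $c_0 := 1/m$ then completes the proof.

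There is essentially no technical obstacle here: the whole argument is the elementary observation that a single multinomial coefficient is dominated by the full multinomial expansion. The only mild point to watch is ensuring that $d_{k,p}\geq 0$ for all relevant $p$, so that ${\mathbb X}_p$ is a genuine multi-projective space and the normalization constant $c_p$ is well-defined; this is automatic for large $p$ since each $L_k$ is big (as recalled via the Shiffman--Ji--Bonavero--Takayama criterion earlier in the paper), and for the finitely many remaining small values of $p$ one can simply absorb them by shrinking $c_0$ if necessary.
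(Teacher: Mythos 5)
Your proof is correct, and it takes a genuinely different and in fact cleaner route than the paper's. The paper rewrites the inequality using Stirling's approximation $\ell!\approx(\ell/e)^\ell\sqrt{2\pi\ell}$, reduces to bounding
\[
\log(l_1+\ldots+l_m)-\sum_{k}\frac{l_k\log l_k}{l_1+\ldots+l_m}\,,
\]
and then applies convexity of $t\mapsto t\log t$ (Jensen) to obtain the constant $\log m$, which after tracking the Stirling error terms gives $c_p\geq c_0$ for some $c_0$ slightly smaller than $1/m$. You bypass all of that: recognizing $(c_p)^{-d_{0,p}}$ as a multinomial coefficient and bounding it by $m^{d_{0,p}}$ via the expansion of $(1+\cdots+1)^{d_{0,p}}$ is completely elementary, requires no asymptotics, and yields the sharp explicit bound $c_p\geq 1/m$ for every $p$ with $d_{0,p}\geq1$. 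Both arguments ultimately exploit the same phenomenon (the multinomial coefficient cannot exceed $m^{d_{0,p}}$, which is essentially the convexity inequality in disguise), but your version is self-contained and avoids the handwaving implicit in "using Stirling's formula it suffices to show\ldots". One minor bookkeeping remark you already flag yourself: the lemma as stated says "for all $p\geq1$", whereas $c_p$ is only defined when every $H^0_{(2)}(X,L_k^p)$ is nonzero; the paper's own proof likewise silently restricts to $p$ large, so your treatment of the small-$p$ cases is consistent with the intended reading.
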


\begin{proof} Fix  $p\geq1$ large enough. For each $ 1\leq k\leq m,$ let $l_k:=d_{k,p}$.
Using Stirling's formula  $\ell!\approx(\ell/e)^\ell \sqrt{2\pi\ell}$ it suffices to show 
that there is a constant $c>0$ such that for all $l_1,\ldots,l_m\geq1$,
$$
\log{(l_1+\ldots +l_m)}-  
\Big(\frac{ l_1\log l_1}{l_1+\ldots+l_m}+ \ldots+ \frac{l_m\log l_m}{l_1+\ldots+l_m}  \Big)\leq  c.
$$
Since the function $t\mapsto  t\log t$, $t>0$, is convex, we infer that
$$ {\frac1m}\Big (l_1 \log l_1+ \ldots+ l_m \log l_m \Big)\geq \frac{l_1+\ldots+l_m}{m}\,
\log\frac{l_1+\ldots+l_m}{m}\;\cdot$$
This implies the required estimate with $c:=\log m$.
\end{proof}

\noindent
Following subsection \ref{SS:Dinh-Sibony} we consider two intermediate degrees for the Kodaira maps $\Phi_p$:
$$
d_p=d(\Phi_p):=\int_X \Phi_p^*(\omega_p^{d_{0,p}})\wedge \omega^{n-m}\quad 
\text{and}\quad \delta_p=\delta(\Phi_p):= \int_X \Phi_p^*(\omega_p^{d_{0,p}-1})\wedge \omega^{n-m+1}.
$$
The next result gives the asymptotic behavior of $d_p$ and $\delta_p$ as $p\to\infty$.

\begin{lemma}\label{L:d_p_delta_p}
We have $d_p=p^m\|  c_1(L_1,h_1)\wedge\ldots\wedge c_1(L_m,h_m)\|$ and  
$$\delta_p=\frac{p^{m-1}}{c_p}\;\sum_{k=1}^m\frac{d_{k,p}}{d_{0,p}}\;
\Big\|\bigwedge_{l=1,l\neq k}^mc_1(L_l,h_l)\Big\|\leq Cp^{m-1}\,,$$
where $C>0$ is a constant depending on $(L_k,h_k)\,$, $1\leq k\leq m$.
\end{lemma}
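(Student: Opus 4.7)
The plan is to compute both degrees by expanding $\omega_p^{d_{0,p}}$ and $\omega_p^{d_{0,p}-1}$ via the multinomial theorem, exploiting the product structure of $\omega_p$ and $\Phi_p$, and then converting the resulting intersection integrals to cohomology. For $d_p$, since $(\pi_k^*\omega_{\FS})^{d_{k,p}+1}=0$ on the $k$-th factor, exactly one multinomial term survives; combining $c_p^{d_{0,p}}$ with the multinomial coefficient $\binom{d_{0,p}}{d_{1,p},\dots,d_{m,p}}$ and using \eqref{e:c_p}, we obtain $\omega_p^{d_{0,p}} = \pi_1^*\omega_{\FS}^{d_{1,p}}\wedge\dots\wedge\pi_m^*\omega_{\FS}^{d_{m,p}} = \sigma_p$. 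The graph-integral definition of the meromorphic-transform pullback together with Fubini then yields
$$d_p = \int_X \Phi_p^*(\sigma_p)\wedge\omega^{n-m} = \int_{{\mathbb X}_p}\Big(\int_X[\bfs_p=0]\wedge\omega^{n-m}\Big)\,d\sigma_p(\bfs_p).$$
By Proposition~\ref{P:Bertini2} the inner integral is well defined for $\sigma_p$-a.e.\ $\bfs_p$, and by cohomology it equals the fixed number $p^m\|c_1(L_1,h_1)\wedge\dots\wedge c_1(L_m,h_m)\|$, independent of $\bfs_p$, giving the stated value of $d_p$.

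For $\delta_p$, the same expansion applied to $\omega_p^{d_{0,p}-1}$ leaves exactly $m$ surviving summands, one for each index $k$ where the exponent drops from $d_{k,p}$ to $d_{k,p}-1$. The identity
$$\binom{d_{0,p}-1}{d_{1,p},\dots,d_{k,p}-1,\dots,d_{m,p}} = \frac{d_{k,p}}{d_{0,p}}\cdot\frac{d_{0,p}!}{d_{1,p}!\cdots d_{m,p}!} = \frac{d_{k,p}}{d_{0,p}}\,c_p^{-d_{0,p}},$$
combined with the prefactor $c_p^{d_{0,p}-1}$, collapses the overall coefficient of the $k$-th summand to $c_p^{-1}d_{k,p}/d_{0,p}$. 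To pull back each summand I would use the product-transform rule $\Phi_p^*(\pi_1^*\alpha_1\wedge\dots\wedge\pi_m^*\alpha_m) = \Phi_{1,p}^*\alpha_1\wedge\dots\wedge\Phi_{m,p}^*\alpha_m$, which follows from the fact that $\Gamma_p$ is the fibered product over $X$ of the single-factor graphs $\Gamma_{k,p}$.

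Each single-factor pullback must then be identified as a meromorphic transform. For $l\neq k$, $\Phi_{l,p}^*\omega_{\FS}^{d_{l,p}}$ is the $\omega_{\FS}^{d_{l,p}}$-average of the zero-divisor currents $[s=0]$, which equals $\gamma_{l,p}$ by the classical Poincar\'e-Lelong-type computation applied to $\frac12\,dd^c\log\sum|c_j|^2$. For the distinguished index $k$, the fiber of $\Phi_{k,p}$ over $x\in X$ is the projective hyperplane $H_x=\{s:s(x)=0\}\subset\P H^0_{(2)}(X,L_k^p)$, and $\omega_{\FS}^{d_{k,p}-1}$ restricted to any linear $\P^{d_{k,p}-1}\subset\P^{d_{k,p}}$ is its normalized Fubini-Study probability measure, so $\Phi_{k,p}^*\omega_{\FS}^{d_{k,p}-1}\equiv 1$. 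Hence the $k$-th summand pulls back to $\bigwedge_{l\neq k}\gamma_{l,p}$.

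Finally, integrating against $\omega^{n-m+1}$ and invoking Proposition~\ref{P:FSwedge} to guarantee the wedge products are well-defined closed positive currents, the cohomology identity $\gamma_{l,p}\sim pc_1(L_l,h_l)$ yields $\int_X \bigwedge_{l\neq k}\gamma_{l,p}\wedge\omega^{n-m+1} = p^{m-1}\|\bigwedge_{l\neq k}c_1(L_l,h_l)\|$. Summing over $k$ produces the stated closed form for $\delta_p$, and the bound $\delta_p\le Cp^{m-1}$ follows from Lemma~\ref{L:c_0} (giving $c_p^{-1}\le c_0^{-1}$), from $d_{k,p}/d_{0,p}\le 1$, and from the finiteness of each intersection mass, which is a fixed cohomological quantity. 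The main obstacle I foresee is the careful bookkeeping for the product-transform pullback together with the two single-factor identifications---in particular verifying that the meromorphic-transform pullback of $\omega_{\FS}^{d_{k,p}}$ really coincides with the $(1,1)$-current $\gamma_{k,p}$ rather than merely being cohomologous to it; once these transform-theoretic identities are firmly in place, the rest is multinomial algebra and a routine cohomology computation.
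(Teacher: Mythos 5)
Your proposal is correct and follows essentially the same strategy as the paper: expand $\omega_p^{d_{0,p}}$ and $\omega_p^{d_{0,p}-1}$ by the multinomial theorem, simplify the coefficients using \eqref{e:c_p}, and reduce the intermediate degrees to cohomological mass computations via Poincar\'e--Lelong.

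One remark on the obstacle you flag at the end: it is self-imposed. You worry about whether $\Phi_{l,p}^*\omega_{_{\FS}}^{d_{l,p}}$ equals the Fubini--Study current $\gamma_{l,p}$ as an actual $(1,1)$-current (and likewise about the exact identity $\Phi_{k,p}^*\omega_{_{\FS}}^{d_{k,p}-1}\equiv 1$), but since $d_p$ and $\delta_p$ are integrals against powers of $\omega$ --- that is, purely cohomological quantities --- only the cohomology class of the pullback matters. The paper exploits exactly this: instead of identifying the pullbacks as currents, it replaces $\omega_{_{\FS}}^{d_{k,p}}$ by a Dirac mass at a generic point and $\omega_{_{\FS}}^{d_{k,p}-1}$ by a generic projective line $\mathcal{D}_k$ (representing the same cohomology classes), for which the pullbacks $\Phi_{k,p}^*(\delta_s)=[s=0]$ and $\Phi_{k,p}^*(\mathcal{D}_k)=[X]$ are immediate from the definition of a meromorphic transform. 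This sidesteps the transform-theoretic identities you were concerned about. Your route is also valid (the current identity $\Phi_{l,p}^*\omega_{_{\FS}}^{d_{l,p}}=\gamma_{l,p}$ is true and is used elsewhere in the paper, e.g.\ in Lemma \ref{L:KTFS}), but it carries more than the lemma actually requires.
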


\begin{proof}
We use a cohomological argument. For the first identity we replace  $\omega_p^{d_{0,p}}$ by 
a Dirac mass $\delta_\bfs,$ where $\bfs:=(s_1,\ldots,s_m)\in{\mathbb X}_p$ is such that 
$\{s_1=0\},\ldots,\{s_m=0\}$ are in general position, so the current $\Phi_p^*(\delta_\bfs)=
[s_1=0]\wedge\ldots\wedge[s_m=0]$ is well defined (see Proposition \ref{P:Bertini2}). 
By the Poincar\'e -Lelong formula \cite[Theorem\,2.3.3]{MM07}, 
$$[s_k=0]=pc_1(L_k,h_k)+dd^c\log|s_k|_{h_{k,p}}\,,\;1\leq k\leq m.$$
Since the current $c_1(L_1,h_1)\wedge\ldots\wedge c_1(L_m,h_m)$ is well defined 
(see Proposition \ref{P:FSwedge}) it follows that 
$$\int_X\Phi_p^*(\delta_\bfs)\wedge\omega^{n-m}=
p^m\int_X\theta_1\wedge\ldots\wedge\theta_m\wedge\omega^{n-m}=
p^m\int_Xc_1(L_1,h_1)\wedge\ldots\wedge c_1(L_m,h_m)\wedge\omega^{n-m},$$
where $\theta_k$ is a smooth closed $(1,1)$ form in the cohomology class of $c_1(L_k,h_k)$. Thus 
$$d_p=\int_X \Phi_p^*(\omega_p^{d_{0,p}})\wedge\omega^{n-m}=
\int_X \Phi_p^*(\delta_\bfs)\wedge\omega^{n-m}=p^m\|  c_1(L_1,h_1)\wedge\ldots\wedge c_1(L_m,h_m)\|.$$

\par For the second identity, a straightforward computation shows that
$$\omega_p^{d_{0,p}-1}=\sum_{k=1}^m\frac{c_p^{d_{0,p}-1}(d_{0,p}-1)!}{d_{1,p}!\ldots(d_{k,p}-1)!\ldots d_{m,p}!}\;
\pi_1^*\omega_{_{\FS}}^{d_{1,p}}\wedge\ldots
\wedge\pi_k^*\omega_{_{\FS}}^{d_{k,p}-1}\wedge\ldots\wedge\pi_m^*\omega_{_{\FS}}^{d_{m,p}}.$$
Using (\ref{e:c_p}) and  replacing  $\omega_{_{\FS}}^{d_{k,p}}$ (resp.\ $\omega_{_{\FS}}^{d_{k,p}-1}$) 
by a generic point  (resp.\ a generic complex line) in $\P H^0_{(2)}(X,L^p_k)$, 
we may replace $\omega_p^{d_{0,p}-1}$ by a current of the form 
$$T:=\sum_{k=1}^m\frac{d_{k,p}}{c_p d_{0,p}}\;[\{s_1\}\times\ldots\times{\mathcal D_k}\times\ldots\times\{s_m\}].$$  
Here, ${\mathcal D_k}$ is a generic  complex line in $ \P H^0_{(2)}(X,L^p_k)$ and $(s_1,\ldots,s_m)$ 
is a generic point in ${\mathbb X}_p$. The genericity of $\mathcal D_k$ implies that $\Phi_{k,p}^*({\mathcal D_k})=X$, 
so
$$\Phi_p^*([\{s_1\}\times\ldots\times{\mathcal D_k}\times\ldots\times\{s_m\}])=
\bigwedge_{l=1,l\neq k}^m[s_l=0]\,.$$ 
The Poincar\'e-Lelong formula yields 
$$\|\Phi_p^*([\{s_1\}\times\ldots\times{\mathcal D_k}\times\ldots\times\{s_m\}])\|=
p^{m-1}\Big\|\bigwedge_{l=1,l\neq k}^mc_1(L_l,h_l)\Big\|.$$
Since $\delta_p=\|\Phi_p^*(T)\|$, the second identity follows. Using Lemma \ref{L:c_0}, this yields the upper bound on $\delta_p$. 
\end{proof}

\begin{lemma}\label{L:KTFS}
For all $p$ sufficiently large we have $\,\Phi^*_p(\sigma_p)=\gamma_{1,p}\wedge\ldots\wedge\gamma_{m,p}\,$.
\end{lemma}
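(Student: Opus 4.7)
The plan is to exploit the product structure of both $\Phi_p$ and $\sigma_p$ to reduce the claim to the classical identity that the expectation of a random zero divisor equals the Fubini--Study current. Since $\sigma_p$ is a smooth probability measure on $\mathbb{X}_p$ (hence a BP measure) and $\Phi_p$ is a meromorphic transform by the setup of this subsection together with Proposition \ref{P:Bertini2}, the Dinh--Sibony pullback of $\sigma_p$ is well defined and admits the ``disintegration'' formula
\[
\Phi_p^*(\sigma_p) \;=\; \int_{\mathbb{X}_p} \Phi_p^*(\delta_{\bfs_p})\, d\sigma_p(\bfs_p) \;=\; \int_{\mathbb{X}_p} [\bfs_p = 0]\, d\sigma_p(\bfs_p),
\]
where the second equality holds $\sigma_p$-almost everywhere by Proposition \ref{P:Bertini2}.

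Next, since $\sigma_p=\mu_{1,p}\otimes\cdots\otimes\mu_{m,p}$, I would apply Fubini and integrate out the variables $s_{p1},\dots,s_{pm}$ one at a time. The workhorse for each step is the classical averaging identity for one line bundle: working in a contractible Stein open set $U\subset X$ on which $L_k$ has a frame $e_k$ and identifying a section $s_{pk}$ with $[t_0:\cdots:t_{d_{k,p}}]\in\P H^0_{(2)}(X,L_k^p)$ via the basis $\{S_j^{k,p}\}$, the local potential of $[s_{pk}=0]$ is $v_{k,p,t}(x)=\log\bigl|\sum_j t_j s_j^{k,p}(x)\bigr|$, and the unitary invariance of $\mu_{k,p}$ gives
\[
\int v_{k,p,t}(x)\, d\mu_{k,p}(t) \;=\; u_{k,p}(x) + \mathrm{const}
\]
with $u_{k,p}$ the local potential of $\gamma_{k,p}$ from \eqref{e:FSpot}. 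Differentiating under the integral therefore yields $\int [s_{pk}=0]\, d\mu_{k,p}(s_{pk}) = \gamma_{k,p}$ as currents on $U$.

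To bring this averaging inside the wedge product I would use the standard identity $dd^c v_{k,p,t}\wedge T=dd^c(v_{k,p,t}\,T)$, valid whenever $T$ is a closed positive current independent of $t$ and $v_{k,p,t}$ is integrable against the trace measure of $T$. Integrating over $\mu_{k,p}$ and swapping $dd^c$ with the integral then gives $\gamma_{k,p}\wedge T$. Iterating this step $m$ times converts every $[s_{pk}=0]$ into $\gamma_{k,p}$ and produces the stated identity $\Phi_p^*(\sigma_p)=\gamma_{1,p}\wedge\cdots\wedge\gamma_{m,p}$.

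The hard part is checking, at each iteration, that the local potential $v_{k,p,t}$ is locally integrable against the trace measure of the partial wedge product of the currents that have already been produced; this is where the general-position hypothesis from Propositions \ref{P:FSwedge}(i) and \ref{P:Bertini2}, combined with \cite[Corollary 2.11]{D93}, does the real work. A cleaner alternative would be to invoke directly the pullback formula for product meromorphic transforms from \cite[Section 3.3]{DS06b}, which is tailor-made for exactly this kind of factorization and bypasses the iterated Fubini argument.
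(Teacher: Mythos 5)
Your proposal is correct and takes essentially the same route as the paper: both reduce $\Phi_p^*(\sigma_p)$ to $\int_{\mathbb{X}_p}[\bfs_p=0]\,d\sigma_p(\bfs_p)$ via the Dinh--Sibony pullback definition, then exploit the product structure of $\sigma_p$ and iterate the single-bundle averaging $\int[s_{pk}=0]\,d\mu_{k,p}=\gamma_{k,p}$ (the paper delegates the iterated averaging to \cite[Proposition 4.2]{CM11}, which is exactly the unitary-invariance argument you spell out), with Propositions \ref{P:FSwedge} and \ref{P:Bertini2} justifying the wedge products and the Fubini exchanges. One small imprecision: with the normalization in \eqref{e:FSpot} one has $dd^cu_{k,p}=\tfrac{1}{p}\gamma_{k,p}$, so the average of $v_{k,p,t}$ is $p\,u_{k,p}+\mathrm{const}$ rather than $u_{k,p}+\mathrm{const}$; the conclusion $\int[s_{pk}=0]\,d\mu_{k,p}=\gamma_{k,p}$ is unaffected.
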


\begin{proof} Let us write ${\mathbb X}_p=X_{1,p}\times\ldots\times X_{m,p}$ 
and $\sigma_p=\sigma_{1,p}\times\ldots\times\sigma_{m,p}$, where $X_{k,p}=
\P H^0_{(2)}(X,L^p_k)$ and $\sigma_{k,p}$ is the Fubini-Study volume on $X_{k,p}$. 
Recall that the meromorphic transform $\Phi_p$ has graph $\Gamma_p$ defined in 
\eqref{e:Gamma_p}, and $\Pi_1:\Gamma_p\longrightarrow X$, $\Pi_2:\Gamma_p\longrightarrow{\mathbb X}_p$, 
denote the canonical projections. By the definition of $\Phi^*_p(\sigma_p)$ (see \cite[Sect. 3.1]{DS06b}) we have 
$$\langle\Phi^*_p(\sigma_p),\phi\rangle=
\int_{\Gamma_p}\Pi_1^*(\phi)\wedge\Pi_2^*(\sigma_p)=
\int_{{\mathbb X}_p}\Pi_{2*}\Pi_1^*(\phi)\wedge\sigma_p=
\int_{{\mathbb X}_p}\langle[\bfs_p=0],\phi\rangle\,d\sigma_p(\bfs_p),$$
where $\phi$ is a smooth $(n-m,n-m)$ form on $X$. Thanks to Propositions \ref{P:FSwedge} 
and \ref{P:Bertini}, we can apply \cite[Proposition 4.2]{CM11} as in the proof of \cite[Theorem 1.2]{CM11} to show that 
\begin{eqnarray*}
\langle\Phi^*_p(\sigma_p),\phi\rangle&=&\int_{X_{m,p}}\ldots\int_{X_{1,p}}\langle[s_{p1}=0]
\wedge\ldots\wedge[s_{pm}=0],\phi\rangle\,d\sigma_{1,p}(s_{p1})\ldots d\sigma_{m,p}(s_{pm})\\
&=&\int_{X_{m,p}}\ldots\int_{X_{2,p}}\langle\gamma_{1,p}\wedge[s_{p2}=0]\wedge\ldots
\wedge[s_{pm}=0],\phi\rangle\,d\sigma_{2,p}(s_{p2})\ldots d\sigma_{m,p}(s_{pm})\\
&=&\ldots\;=\langle\gamma_{1,p}\wedge\ldots\wedge\gamma_{m,p},\phi\rangle\,.
\end{eqnarray*}
This concludes the proof of the lemma. 
\end{proof}

\begin{lemma}\label{L:Dinh-Sibony}
There exist absolute constants $C_1,\alpha>0$, and constants $C_2,\alpha',\xi>0$ 
depending only on $m\geq1$, such that for all $\ell,\ell_1,\ldots,\ell_m\geq 1$ and $t\geq0$,
\begin{eqnarray*}
R(\P^\ell,\omega_{_{\FS}},\omega_{_{\FS}}^\ell)&\leq & \frac{1}{2}\,(1+\log\ell)\,,\\
\Delta(\P^\ell,\omega_{_{\FS}},\omega_{_{\FS}}^\ell,t)&\leq & C_1\ell\,e^ {-\alpha t}\,,\\
r(\P^{\ell_1}\times\ldots\times\P^{\ell_m},\omega_{_\MP})&\leq&r(\ell_1,\ldots,\ell_m):=\max_{1\leq k\leq m}\frac{d}{\ell_k}\;,\\
R(\P^{\ell_1}\times\ldots\times\P^{\ell_m},\omega_{_\MP},\omega_{_\MP}^d)&\leq & C_2r(\ell_1,\ldots,\ell_m)(1+\log d)\,,\\
\Delta(\P^{\ell_1}\times\ldots\times\P^{\ell_m},\omega_{_\MP},\omega_{_\MP}^d,t)
&\leq & C_2d^{\,\xi}e^{-\alpha't/r(\ell_1,\ldots,\ell_m)}\,,
\end{eqnarray*}
where 
$$d=\ell_1+\ldots+\ell_m\,,\;\omega_{_\MP}:=c\big(\pi_1^\ast(\omega_{_{\FS}})+
\ldots+\pi_m^\ast(\omega_{_{\FS}})\big)\,,\;c^{-d}=\frac{d!}{\ell_1!\ldots\ell_m!}\,,$$ 
so $\omega_{_\MP}^d$ is a probability measure on $\P^{\ell_1}\times\ldots\times\P^{\ell_m}$.  
\end{lemma}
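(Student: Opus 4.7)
The plan is to derive the five estimates by combining classical pluripotential facts on $\P^\ell$ with a slicing argument on the multi-projective space $\P^{\ell_1}\times\ldots\times\P^{\ell_m}$.

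\textbf{Step 1 (single factor).} The first two inequalities are standard and I would deduce them from the appendix of \cite{DS06b}. Concretely, if $\varphi\in Q(\P^\ell,\omega_{_{\FS}})$ satisfies $\int\varphi\,\omega_{_{\FS}}^\ell=0$, then restricting $\varphi$ to a generic projective line, applying a submean inequality for subharmonic functions, and iterating dimension by dimension yields the logarithmic bound $\max_{\P^\ell}\varphi\leq\tfrac12(1+\log\ell)$. Skoda's uniform integrability theorem, applied to the family of $\omega_{_{\FS}}$-qpsh functions normalized by this mean-value condition, produces the exponential decay $\omega_{_{\FS}}^\ell\{\varphi<-t\}\leq C_1\ell\,e^{-\alpha t}$ with absolute constants $C_1,\alpha>0$.

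\textbf{Step 2 (multi-projective radius).} For the bound on $r(\P^{\ell_1}\times\ldots\times\P^{\ell_m},\omega_{_\MP})$, I plan a Künneth argument. A positive closed $(1,1)$ current $T$ of mass one on the product is cohomologous to a smooth representative of the form $\sum_{k}a_k\,\pi_k^{\ast}\omega_{_{\FS}}$. The normalization $c^{-d}=\frac{d!}{\ell_1!\cdots\ell_m!}$ and the mass identity $\int T\wedge\omega_{_\MP}^{d-1}=1$ force a linear relation $\sum_{k}a_k\frac{\ell_k}{d}=1/c$, and the extremal case gives $\max_k a_k\sim d/(c\,\ell_k)$. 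Subtracting $c\,\omega_{_\MP}$ and comparing with $\omega_{_\MP}$ factor by factor yields a bound on the smooth correction term of order $\max_k d/\ell_k$, as required.

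\textbf{Step 3 (product $R$ and $\Delta$ via slicing).} Given $\varphi\in Q(\P^{\ell_1}\times\ldots\times\P^{\ell_m},\omega_{_\MP})$ with vanishing $\omega_{_\MP}^d$-integral, I would freeze the coordinates in all but one factor: the restriction of $\varphi$ to $\{x_1\}\times\ldots\times\P^{\ell_k}\times\ldots\times\{x_m\}$ is $\omega_{_{\FS}}$-qpsh with curvature lower bound at most $c\,r(\ell_1,\ldots,\ell_m)$. After rescaling, the single-factor estimates of Step 1 apply. Using Fubini for the product measure $\omega_{_\MP}^d=(c^d\,d!/(\ell_1!\cdots\ell_m!))\,\prod_k\omega_{_{\FS}}^{\ell_k}=\prod_k\omega_{_{\FS}}^{\ell_k}$ (up to the weights from \eqref{e:c_p}) and iterating over the $m$ factors, the logarithmic estimate for each slice integrates to the claimed $R$-bound $C_2\,r(\ell_1,\ldots,\ell_m)(1+\log d)$, while summing the exponential tail estimates across the $m$ directions and controlling the cross-terms gives the $\Delta$-bound of the shape $C_2 d^{\,\xi}e^{-\alpha' t/r(\ell_1,\ldots,\ell_m)}$.

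\textbf{Main obstacle.} The delicate point is bookkeeping the normalization constant $c$ defined by $c^{-d}=d!/(\ell_1!\cdots\ell_m!)$ as one passes between $\omega_{_\MP}$ and the individual Fubini–Study forms on the slices, and tracking how the multiplicative losses incurred at each of the $m$ slicing steps combine. Stirling's formula (as already used in Lemma \ref{L:c_0}) keeps $c$ uniformly bounded below by an absolute positive constant, so the per-step loss is at most polynomial in $d$; identifying the correct exponent $\xi=\xi(m)$ that absorbs all of these combinatorial factors, while preserving the exponent $\alpha'/r(\ell_1,\ldots,\ell_m)$ in the tail, is the main technical difficulty and fixes the value of the universal constant $\xi$ appearing in Theorems \ref{T:main2} and \ref{T:speed}.
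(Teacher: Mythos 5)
Your Steps 1 and 2 follow the same route as the paper: the single-factor bounds are taken from the appendix of \cite{DS06b} (specifically Proposition~A.3 and Corollary~A.5), and the bound on $r(\P^{\ell_1}\times\ldots\times\P^{\ell_m},\omega_{_\MP})$ is obtained by the same cohomological computation. However, your linear relation in Step~2 is off: with $\alpha=\sum_k a_k\pi_k^*\omega_{_{\FS}}$ and $\omega_{_\MP}=c\sum_k\pi_k^*\omega_{_{\FS}}$, one computes
\[
1=\|T\|=\int\alpha\wedge\omega_{_\MP}^{d-1}
=c^{d-1}\sum_{k=1}^m a_k\,\frac{(d-1)!}{\ell_1!\cdots(\ell_k-1)!\cdots\ell_m!}
=\sum_{k=1}^m\frac{a_k\ell_k}{c\,d}\,,
\]
so $\sum_k a_k\ell_k=c\,d$, not $d/c$ as you wrote. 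Your version gives $a_k\leq d/(c\,\ell_k)$ and hence $\max_k a_k/c\leq \max_k d/(c^2\ell_k)$, which is weaker than the stated $r(\ell_1,\ldots,\ell_m)=\max_k d/\ell_k$ by the (bounded but nontrivial) factor $c^{-2}$. With the corrected relation one gets $a_k\leq c\,d/\ell_k$ and then $\alpha\leq(\max_k a_k/c)\,\omega_{_\MP}\leq r(\ell_1,\ldots,\ell_m)\,\omega_{_\MP}$ exactly; the ``subtracting $c\,\omega_{_\MP}$'' phrasing should be replaced by this direct form-comparison (note also $\alpha\geq 0$, so the lower bound in the definition of $r$ is automatic).

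For Step~3 the paper does not carry out a slicing argument: it simply invokes the general estimates of Propositions~A.8 and A.9 in \cite{DS06b}, which bound $R$ and $\Delta$ on an arbitrary compact K\"ahler manifold $(Y,\omega_Y)$ in terms of $\dim Y$ and $r(Y,\omega_Y)$, and then feeds in the $r$-bound from Step~2. Your proposal to re-derive these by slicing and Fubini, using $\omega_{_\MP}^d=\prod_k\pi_k^*\omega_{_{\FS}}^{\ell_k}$ (which, as you note, holds with no residual combinatorial weight), is a plausible alternative but would require substantial bookkeeping you have only gestured at: the restriction of a $\varphi\in Q(\P^{\ell_1}\times\ldots\times\P^{\ell_m},\omega_{_\MP})$ to a slice is $(c\,r)$-qpsh with respect to $\omega_{_{\FS}}$ but need not have vanishing slice-mean, so one has to recenter and control the means as random variables on the remaining factors when iterating, which is precisely where the $d^\xi$ polynomial loss and the specific form of the exponent $\alpha'/r(\ell_1,\ldots,\ell_m)$ come from. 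If you want a self-contained proof this is a legitimate path, but simply citing A.8--A.9 as the paper does is considerably shorter and avoids having to reprove these uniform integrability facts.
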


\begin{proof} The first two inequalities are proved in Proposition A.3 and Corollary A.5 
from \cite{DS06b}. If $T$ is a positive closed current of bidegree $(1,1)$ on 
$\P^{\ell_1}\times\ldots\times\P^{\ell_m}$ with $\|T\|=1$ then $T$ is in the cohomology class 
of $\alpha=a_1\pi_1^\ast(\omega_{_{\FS}})+\ldots+a_m\pi_m^\ast(\omega_{_{\FS}})$, 
for some $a_k\geq0$. Hence 
$$0\leq\alpha\leq\left(\max_{1\leq k\leq m}\frac{a_k}{c}\right)\omega_{_\MP}\,.$$
Now 
$$1=\|T\|=\int_{\P^{\ell_1}\times\ldots\times\P^{\ell_m}}\alpha\wedge\omega_{_\MP}^{d-1}
=\sum_{k=1}^m\frac{a_k\ell_k}{cd}\,,$$
so $a_k/c\leq d/\ell_k$. Thus $r(\P^{\ell_1}\times\ldots\times\P^{\ell_m},\omega_{_\MP})
\leq\max_{1\leq k\leq m}\frac{d}{\ell_k}$. 
The last two inequalities follow from these estimates by applying \cite[Proposition A.8, Proposition A.9]{DS06b}.
\end{proof}

\par We will also need the following lower estimate for the dimension $d_{k,p}$.

\begin{proposition}\label{P:Bsdim}
Let $(X,\omega)$ be a compact K\"ahler manifold of dimension $n$. Let $(L,h)\to X$ be a singular
Hermitian holomorphic line bundle such that $c_1(L,h)\geq\varepsilon\omega$ for some
$\varepsilon>0$ and $h$ is continuous outside a proper analytic subset of $X$.
Then there exists $C>0$ and $p_0\in\N$ such that
\[
\dim H^0_{(2)}(X,L^p)\geq Cp^n\,,\:\:\forall\,p\geq p_0.
\]
\end{proposition}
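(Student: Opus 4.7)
The plan is to identify the Bergman space $H^0_{(2)}(X,L^p)$ with the space of global sections of $L^p$ twisted by the multiplier ideal sheaf of $h^p$, and then apply singular holomorphic Morse inequalities of Demailly--Bonavero type to obtain a lower bound of order $p^n$. \textbf{Step 1 (multiplier ideal reformulation).} If $e_L$ is a local holomorphic frame on $U \subset X$ with $|e_L|_h = e^{-\phi}$, then a section $s = f\,e_L^{\otimes p} \in H^0(X, L^p)$ satisfies $|s|_{h^p}^2 = |f|^2 e^{-2p\phi}$, so $s$ is $L^2$ near $x$ iff the germ of $f$ at $x$ lies in the multiplier ideal $\mathcal{I}(p\phi)_x$. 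Since $X$ is compact and $\omega^n$ is bounded, this yields the global identification
$$H^0_{(2)}(X, L^p) = H^0\bigl(X, L^p \otimes \mathcal{I}(h^p)\bigr).$$

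\textbf{Step 2 (Morse inequalities).} Because $c_1(L,h) \geq \varepsilon \omega$ with $\varepsilon > 0$, the current $c_1(L,h)$ is a K\"ahler current and $L$ is big. Using Demailly's regularization of closed $(1,1)$-currents I would approximate $h$ by a sequence of metrics $h_j$ with analytic singularities satisfying $c_1(L, h_j) \geq (\varepsilon/2)\omega$ and whose multiplier ideals embed into those of $h$. Bonavero's singular holomorphic Morse inequalities applied to $(L, h_j)$ then give
$$\dim H^0\bigl(X, L^p \otimes \mathcal{I}(h_j^p)\bigr) \geq \frac{p^n}{n!} \int_X c_1(L, h_j)^n + o(p^n),$$
and the right-hand side is at least $C'p^n$ since $c_1(L, h_j) \geq (\varepsilon/2)\omega$ forces the integral to dominate $(\varepsilon/2)^n\int_X\omega^n > 0$. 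Combined with the inclusion $\mathcal{I}(h_j^p) \subset \mathcal{I}(h^p)$, this yields $\dim H^0_{(2)}(X, L^p) \geq C p^n$ for all $p \geq p_0$.

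The principal obstacle is the rigorous application of the singular Morse inequality outside the regime of metrics with analytic singularities, which is what Demailly's approximation is designed to handle; one must also check that the lower bound on the integral survives the passage to the limit, which is the content of the strict positivity $c_1(L,h) \geq \varepsilon\omega$. A more hands-on alternative bypasses Morse inequalities entirely by invoking the Ohsawa--Takegoshi extension theorem: fix a ball $B \Subset X \setminus \Sigma$ where $h$ is continuous, choose $\sim p^n$ lattice points in $B$ at spacing $p^{-1/2}$, and for each such point construct an $L^2$ section of $L^p$ peaked near it, the curvature hypothesis $c_1(L,h) \geq \varepsilon\omega$ providing exactly the positivity needed for the $L^2$ existence theorem. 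The resulting sections are approximately diagonal at the lattice, hence linearly independent, which gives the desired bound $\dim H^0_{(2)}(X,L^p) \geq Cp^n$.
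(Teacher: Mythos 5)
Your Step~1 correctly identifies $H^0_{(2)}(X,L^p)$ with $H^0(X,L^p\otimes\mI(h^p))$; this identification is also the starting point of the paper's argument. But Step~2 has a genuine gap: the multiplier ideal inclusion goes the wrong way. Demailly's regularization produces local weights $\phi_j$ with analytic singularities that \emph{decrease} to $\phi$, so $\phi_j\geq\phi$, hence $e^{-2p\phi_j}\leq e^{-2p\phi}$, and therefore $\mI(h^p)\subset\mI(h_j^p)$. The regularized metric is \emph{less} singular, so its multiplier ideal sheaf is \emph{larger}; you claim the opposite. Consequently Bonavero's lower bound $\dim H^0(X,L^p\otimes\mI(h_j^p))\geq C'p^n$ sits on the wrong side of the inequality and gives no information about the subspace $H^0(X,L^p\otimes\mI(h^p))$ you need to bound from below. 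Equisingular approximation only controls $\mI(\phi_j)=\mI(\phi)$ at level one; for the whole family $\mI(p\phi)$, $p\to\infty$, one would have to invoke Demailly--Ein--Lazarsfeld staircase relations and track the losses carefully, none of which appears in your sketch. Your Ohsawa--Takegoshi alternative is a genuinely different route that does work in principle, but the assertion that the peaked sections are ``approximately diagonal at the lattice, hence linearly independent'' is doing all the work and is not established: OT produces a section with a prescribed value at a single point and controlled global $L^2$ norm, but says nothing about the pointwise values of that section at the \emph{other} lattice points, which is exactly what approximate diagonality requires.

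The paper's proof is, in effect, a one-point version of your peaking idea that sidesteps the linear-independence problem. It modifies the metric to $h_0=h\exp(-\psi)$, where $\psi(x)=\eta\chi(x)\log|x-x_0|$ is a logarithmic pole concentrated at a single point $x_0\notin\Sigma$ (with $\chi$ a cut-off), choosing $\eta$ small so that $c_1(L,h_0)\geq(\varepsilon/2)\omega$ persists. Nadel vanishing gives $H^1(X,L^p\otimes\mI(h_0^p))=0$ for $p\geq p_0$, and since $\mI(h_0^p)=\mI(h^p)\otimes\mI(p\psi)$ the short exact sequence
\[
0\to L^p\otimes\mI(h^p)\otimes\mI(p\psi)\to L^p\otimes\mI(h^p)\to L^p\otimes\mI(h^p)\otimes\mO_X/\mI(p\psi)\to 0
\]
yields a surjection of $H^0_{(2)}(X,L^p)=H^0(X,L^p\otimes\mI(h^p))$ onto $L^p_{x_0}\otimes\mO_{X,x_0}/\mI(p\psi)_{x_0}$, where one uses that $\mI(h^p)_{x_0}=\mO_{X,x_0}$ because $h$ is continuous at $x_0$. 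Since $\mI(p\psi)_{x_0}\subset\mathcal{M}_{X,x_0}^{[p\eta]-n+1}$ and $\dim \mO_{X,x_0}/\mathcal{M}_{X,x_0}^{k+1}=\binom{k+n}{k}$, the target space has dimension at least a constant times $p^n$. Producing $\sim p^n$ independent jets at one point is cleaner than producing $\sim p^n$ peaked sections at distinct points: sections realizing distinct jets at $x_0$ are automatically linearly independent, with no off-diagonal estimate required.
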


\begin{proof}
Let $\Sigma\subset X$ be a proper analytic set such that $h$ is continuous on $X\setminus\Sigma$.
We fix $x_0\in X\setminus\Sigma$ and $r>0$ such that $B(x_0,2r)\cap\Sigma=\emptyset$.
Let $0\leq\chi\leq1$ be a smooth cut-off function which equals $1$ on $\overline B(x_0,r)$
and is supported in $B(x_0,2r)$. 
We consider the function $\psi:X\to[-\infty,\infty)$, $\psi(x)=\eta\chi(x)\log|x-x_0|$, where $\eta>0$.

\par Consider the metric $h_0=h\exp(-\psi)$ on $L$. We choose $\eta$ sufficiently small such that
\[
c_1(L,h_0)\geq\frac{\varepsilon}{2}\,\omega \:\:\text{on $X$}.
\]
Let us denote by $\mI(h^p)$ the multiplier ideal sheaf associated with $h^p$. 
Note that $H^0_{(2)}(X,L^p)=H^0(X,L^p\otimes\mI(h^p))$. 
The Nadel vanishing theorem \cite{D93b, Nad89} shows that there exists $p_0\in\N$ such that
\begin{equation}\label{Nvth}
H^1(X,L^p\otimes\mI(h_0^p))=0\,,\:\:p\geq p_0.
\end{equation}
Note that $\mI(h_0^p)=\mI(h^p)\otimes\mI(p\psi)$. Consider the exact sequence
\begin{equation}\label{shs}
0\to L^p\otimes \mI(h^p)\otimes\mI(p\psi)\to L^p\otimes \mI(h^p)\to
L^p\otimes \mI(h^p)\otimes \mO_X/\mI(p\psi)\to 0\,.
\end{equation}
Thanks to \eqref{Nvth} applied to the long exact cohomology sequence associated with \eqref{shs}
we have
\begin{equation}\label{lhs}
H^0(X,L^p\otimes \mI(h^p))\to
H^0(X,L^p\otimes \mI(h^p)\otimes \mO_X/\mI(p\psi))\to 0\,,\:\:p\geq p_0.
\end{equation}
Now,  for $x\neq x_0$, $\mI(p\psi)_x=\mO_{X,x}$ hence $\mO_{X,x}/\mI(p\psi)_x=0$.
Moreover $\mI(h^p)_{x_0}=\mO_{X,x_0}$ since $h$ is continuous at $x_0$.
Hence
\begin{equation}\label{e}
\begin{split}
H^0(X,L^p\otimes \mI(h^p)\otimes \mO_X/\mI(p\psi))&=
L^p_{x_0}\otimes \mI(h^p)_{x_0}\otimes \mO_{X,x_0}/\mI(p\psi)_{x_0}\\
&=
L^p_{x_0}\otimes\mO_{X,x_0}/\mI(p\psi)_{x_0},
\end{split}
\end{equation}
so 
\begin{equation}\label{shs1}
H^0(X,L^p\otimes \mI(h^p))\to
L^p_{x_0}\otimes\mO_{X,x_0}/\mI(p\psi)_{x_0}\to 0\,,\:\:p\geq p_0.
\end{equation}
Denote by $\mathcal{M}_{X,x_0}$ the maximal ideal of  $\mathcal{O}_{X,x_0}$ (that is, 
germs of holomorphic functions vanishing at $x_0$).
We have $\mI(p\psi)_{x_0}\subset\mathcal{M}_{X,x_0}^{[p\eta]-n+1}$
and $\dim \mO_{X,x_0}/\mathcal{M}_{X,x_0}^{k+1}=\binom{k+n}{k}$,
which together with \eqref{shs1} implies the conclusion.
\end{proof}

\smallskip

\begin{proof}[Proof of Theorem \ref{T:speed}] We will apply Theorem \ref{T:Dinh-Sibony} 
to the meromorphic transforms $\Phi_p$ from $X$ to the multi-projective space $({\mathbb X}_p,\omega_p)$ 
defined above, and the BP measures $\nu_p:=\sigma_p$ on ${\mathbb X}_p$. For $t\in \R$ and $\varepsilon>0$ let
\begin{eqnarray}
&&R_p:=R({\mathbb X}_p,\omega_p,\sigma_p)\;,\;\;\Delta_p(t):=   
\Delta({\mathbb X}_p,\omega_p,\sigma_p,t)\,,\nonumber\\
&&E_p(\varepsilon):=\bigcup_{\|\phi\|_{\Cc^2}\leq 1}\big\lbrace \bfs\in 
{\mathbb X}_p:\  \big|\big\langle[\bfs=0]-\gamma_{1,p}\wedge\ldots\wedge\gamma_{m,p}\,,
\phi\big\rangle\big|\geq d_p\varepsilon \big\rbrace\label{e:E_p_epsilon}\,.
\end{eqnarray}
It follows from Siegel's lemma \cite[Lemma 2.2.6] {MM07} and Proposition \ref{P:Bsdim} 
that there exist $C_3>0$ depending only on $(X,L_k,h_k)_{1\leq k\leq m}$ and $p_0\in\mathbb N$ such that 
$$p^n/C_3\leq d_{k,p}\leq C_3p^n\,,\;p\geq p_0\,,\;1\leq k\leq m.$$ 
By the last two inequalities in Lemma \ref{L:Dinh-Sibony} we obtain for $p\geq p_0$ and $t\geq0$, 
\begin{equation}\label{e:c'}
\begin{split}
R_p&\leq mC_2C_3^2(1+ \log(mC_3p^n))\leq C_4\log p\,,\\
\Delta_p(t)&\leq C_2(mC_3p^n)^\xi\exp\left(\frac{-\alpha't}{mC_3^2}\right)\leq 
C_4p^{\xi n}e^{-t/C_4}\,,
\end{split}
\end{equation}
where $C_4$ is a constant depending only on $(X,L_k,h_k)_{1\leq k\leq m}$. Now set 
$$\varepsilon_p:=\lambda_p/p\;,\;\;\eta_p:=\varepsilon_pd_p/\delta_p-3R_p\,.$$
Lemma \ref{L:d_p_delta_p} implies that $d_p\approx p^m$, $\delta_p\lesssim p^{m-1}$, so 
$$\eta_p\geq C_5\lambda_p-3C_4\log p\,,\;p\geq p_0\,,$$
where $C_5$ is a constant depending only on $(X,L_k,h_k)_{1\leq k\leq m}$. Note that for all $p$ sufficiently large, 
$$\eta_p>\frac{C_5}{2}\,\lambda_p\,,\,\text{ provided that }\liminf_{p\to\infty}\frac{\lambda_p}{\log p}>6C_4/C_5\,.$$ 
If $E_p=E_p(\varepsilon_p)$ then it follows from Theorem \ref{T:Dinh-Sibony} and 
Lemma \ref{L:KTFS} that for all $p$ sufficiently large 
$$\sigma_p(E_p)\leq\Delta_p(\eta_p)\leq C_4 p^{\xi n}\exp\Big(\frac{-C_5\lambda_p}{2C_4}\Big),$$
where for the last estimate we used \eqref{e:c'}. Let 
$$c=\max\left(\frac{6C_4}{C_5(1+\xi n)}\,,\frac{2C_4}{C_5}\,,C_4,\|c_1(L_1,h_1)\wedge\ldots\wedge c_1(L_m,h_m)\|\right).$$
If $\liminf_{p\to\infty}(\lambda_p/\log p)>(1+\xi n)c$ then for all $p$ sufficiently large 
$$\sigma_p(E_p)\leq C_4p^{\xi n}\exp\Big(\frac{-C_5\lambda_p}{2C_4}\Big)
\leq c\,p^{\xi n}\exp\Big(\frac{-\lambda_p}{c}\Big).$$
On the other hand we have by the definition of $E_p$ that if 
$\bfs_p\in{\mathbb X}_p\setminus E_{p}$ and $\phi$ is an $(n-m,n-m)$ form of class $\Cc^2$ then 
$$\left|\frac{1}{p^m}\left \langle[\bfs_p=0]- \gamma_{1,p}\wedge\ldots\wedge\gamma_{m,p}\,,
\phi\right\rangle\right|\leq\frac{d_p}{p^m}\,\frac {\lambda_p}{p}\,
\|\phi\|_{\Cc^2}\leq c\,\frac {\lambda_p}{p}\,\|\phi\|_{\Cc^2}\,.$$
In the last inequality we used the fact that $d_p\leq c\,p^m$ by Lemma \ref{L:d_p_delta_p}.

\par For the last conclusion of Theorem \ref{T:speed} we proceed as in \cite[p.\ 9]{DMM14}. 
The assumption on $\lambda_p/\log p$ and $(a)$ imply that 
$$
\sum_{p=1}^\infty \sigma_p(E_p)\leq c'\sum_{p=1}^\infty \frac{1}{p^\eta}<\infty
$$ 
for some $c'>0$ and $\eta>1$. Hence the set
$$
E:=\big\lbrace  \bfs =(\bfs_1,\bfs_2,\ldots)\in\Omega:\ \bfs_p\in E_p
 \ \text{for infinitely many}\ p \big\rbrace
$$
satisfies $\sigma_\infty(E)=0$. Indeed, for every  $N\geq 1$, $E$ is contained in the set  
$$
\big\lbrace  \bfs =(\bfs_1,\bfs_2,\ldots)\in\Omega:\ \bfs_p\in E_p \ \text{for at least one}
\ p\geq N \big\rbrace,
$$
whose $\sigma_\infty$-measure is at most 
$$
\sum_{p=N}^\infty \sigma_p(E_p)\leq c'\sum_{p=N}^\infty \frac{1}{p^\eta}\to 0\;\;\text{as } N\to\infty.
$$
The proof of the theorem  is thereby completed.
 \end{proof}
 
\medskip

\begin{proof}[Proof of Theorem \ref{T:main1}] Theorem \ref{T:main1} follows directly from 
Theorem \ref{T:speed} and Proposition \ref{P:FSwedge} $(iii)$.
\end{proof}

 
\section{Equidistribution with convergence speed for H\"older singular metrics}
\label{S:PT:main2&3}
 
\par In this section we prove Theorem \ref{T:main2}. We close with more examples of 
H\"older metrics with singularities. Theorem \ref{T:main2} follows at once from Theorem \ref{T:speed} and the next result.

\begin{theorem}\label{T:FSspeed1}  
In the setting of Theorem \ref{T:main2}, there exists a constant $c>0$ depending only on 
$(X,L_1,h_1,\ldots,L_m,h_m)$ such that for all $p$ sufficiently large the estimate 
$$\Big|\Big\langle\frac{1}{p^m}\,\bigwedge_{k=1}^m\gamma_{k,p}-\bigwedge_{k=1}^mc_1(L_k,h_k),\phi\Big\rangle\Big|
\leq \frac{c\log p}{p}\, \| \phi\|_{\Cc^2}$$
holds for every $(n-m,n-m)$ form $\phi $ of class $\Cc^2$.  
\end{theorem}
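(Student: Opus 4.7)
The crux is a telescoping decomposition combined with the pointwise Bergman kernel bound of Theorem~\ref{T:Bka}. Set $\Psi_{k,p}:=\frac{1}{2p}\log P_{k,p}$; by \eqref{e:FSpot}--\eqref{e:FSB} this globally defined function satisfies $\Psi_{k,p}=u_{k,p}-u_k$ in every local frame, and hence
\[
dd^c\Psi_{k,p}=\frac{1}{p}\,\gamma_{k,p}-c_1(L_k,h_k)
\]
as currents on $X$. Applying Theorem~\ref{T:Bka} to each $(L_k,h_k)$ and using $\Sigma(h_k)\subset\Sigma$, which gives $\dist(z,\Sigma)\leq\dist(z,\Sigma(h_k))$, I obtain a constant $c>0$ and $p_0\in\N$ depending only on $(X,\omega,L_1,h_1,\ldots,L_m,h_m)$ such that
\[
|\Psi_{k,p}(z)|\leq\frac{c}{p}\bigl(\log p+|\log\dist(z,\Sigma)|\bigr),\qquad z\in X\setminus\Sigma,\ p\geq p_0,\ 1\leq k\leq m.
\]

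Writing $\alpha_j:=c_1(L_j,h_j)$ and $\beta_j:=\frac{1}{p}\gamma_{j,p}$, and noting that each
\[
R_{k,p}:=\beta_1\wedge\cdots\wedge\beta_{k-1}\wedge\alpha_{k+1}\wedge\cdots\wedge\alpha_m
\]
is a well-defined positive closed current on $X$ by Proposition~\ref{P:FSwedge}(ii), I telescope
\[
\bigwedge_{j=1}^m\beta_j\;-\;\bigwedge_{j=1}^m\alpha_j\;=\;\sum_{k=1}^m(\beta_k-\alpha_k)\wedge R_{k,p}.
\]
Pairing each summand with $\phi$ and integrating by parts gives
\[
\bigl\langle(\beta_k-\alpha_k)\wedge R_{k,p},\phi\bigr\rangle=\int_X\Psi_{k,p}\,dd^c\phi\wedge R_{k,p}.
\]
Since $\supp(dd^c\phi)$ sits at distance $\delta:=\dist(\supp dd^c\phi,\Sigma)>0$ from $\Sigma$, the bound above yields $|\Psi_{k,p}|\leq\frac{c}{p}(\log p+|\log\delta|)$ on this support. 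Moreover $\beta_j$ and $\alpha_j$ are cohomologous by \eqref{e:FSB}, so $\|R_{k,p}\|$ is controlled by a constant independent of $p$, and one concludes
\[
\Bigl|\bigl\langle(\beta_k-\alpha_k)\wedge R_{k,p},\phi\bigr\rangle\Bigr|\leq\frac{c'}{p}\bigl(\log p+|\log\delta|\bigr)\|\phi\|_{\Cc^2}
\]
for some constant $c'$ depending only on $(X,L_1,h_1,\ldots,L_m,h_m)$. Summing over $k=1,\ldots,m$ produces the theorem.

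The principal obstacle is the justification of the integration-by-parts step, since both $\Psi_{k,p}$ and the local potentials of $R_{k,p}$ are in general unbounded on $\Sigma$. The remedy exploits that $dd^c\phi$ is smooth and compactly supported in $X\setminus V$ for some open neighborhood $V$ of $\Sigma$: choosing a smooth cut-off $\chi$ with $\chi\equiv 1$ on a neighborhood of $\supp(dd^c\phi)$ and $\chi\equiv 0$ on a smaller neighborhood of $\Sigma$, one has $\chi\,dd^c\phi=dd^c\phi$ identically on $X$, the function $\chi\Psi_{k,p}$ is bounded on $X$, and on $\supp\chi$ the local potentials of $R_{k,p}$ are continuous (by the continuity of $u_j$ outside $\Sigma(h_j)\subset\Sigma$ together with the inclusion $\Sigma_{j,p}\subset\Sigma(h_j)$ and the continuity of $\log P_{j,p}$ off $\Sigma_{j,p}$). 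Standard integration by parts for positive closed currents against bounded test forms with continuous local potentials (cf.~\cite{D93}) then delivers the identity; equivalently, one may approximate $\Psi_{k,p}$ by the truncations $\max(\Psi_{k,p},-N)$ and pass to the limit via Bedford--Taylor continuity using the pointwise bound above.
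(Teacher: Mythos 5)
Your proposal is correct and follows essentially the same route as the paper's proof: the identical telescoping decomposition (merely with the two orderings of $\frac{1}{p}\gamma_{j,p}$ and $c_1(L_j,h_j)$ swapped), replacement of $\frac{1}{p}\gamma_{k,p}-c_1(L_k,h_k)$ by $\frac{1}{2p}dd^c\log P_{k,p}$ via \eqref{e:FSB}, integration by parts onto $dd^c\phi$, the pointwise bound from Theorem \ref{T:Bka} on $\supp dd^c\phi$, and cohomological control of the masses. Your extra care in justifying the integration by parts with a cutoff is a welcome elaboration of the paper's one-line appeal to the continuity of $\log P_{k,p}$ on $\supp dd^c\phi$, but it is not a different argument.
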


\begin{proof} We may assume that $\phi$ is real. 
There exists a constant $c'>0$ such that for every real $(n-m,n-m)$ form $\phi $ of class $\Cc^2$, 
\begin{equation}\label{e:normpsi}
-c' \|\phi\|_{\Cc^2}\,\omega^{n-m+1}\leq  dd^c\phi\leq c' \|\phi\|_{\Cc^2}\,\omega^{n-m+1}.
\end{equation}
Using Proposition \ref{P:FSwedge} and \eqref{e:FSB} we can write 
$$\Big\langle\frac{1}{p^m}\,\bigwedge_{k=1}^m\gamma_{k,p}-\bigwedge_{k=1}^mc_1(L_k,h_k),\phi\Big\rangle
=\sum_{k=1}^mI_k\,,$$
where
\begin{eqnarray*}
I_k&=&\Big\langle c_1(L_1,h_1)\wedge\ldots\wedge c_1(L_{k-1},h_{k-1})
\wedge\Big(\frac{\gamma_{k,p}}{p}-c_1(L_k,h_k)\Big)\wedge\frac{\gamma_{k+1,p}}{p}
\wedge\ldots\wedge\frac{\gamma_{m,p}}{p}\,,\phi\Big\rangle\\
&=&\Big\langle c_1(L_1,h_1)\wedge\ldots\wedge c_1(L_{k-1},h_{k-1})\wedge\frac{dd^c\log P_{k,p}}{2p}
\wedge\frac{\gamma_{k+1,p}}{p}\wedge\ldots\wedge\frac{\gamma_{m,p}}{p}\,,\phi\Big\rangle\\
&=&\int_X\frac{\log P_{k,p}}{2p}\;c_1(L_1,h_1)\wedge\ldots\wedge c_1(L_{k-1},h_{k-1})
\wedge\frac{\gamma_{k+1,p}}{p}\wedge\ldots\wedge\frac{\gamma_{m,p}}{p}\wedge dd^c\phi\,.
\end{eqnarray*}
By \eqref{e:normpsi} the total variation of the measure in the last integral is dominated by the 
positive measure $c' \|\phi\|_{\Cc^2}\,\mu$, where 
$$\mu:=c_1(L_1,h_1)\wedge\ldots\wedge c_1(L_{k-1},h_{k-1})
\wedge\frac{\gamma_{k+1,p}}{p}\wedge\ldots\wedge\frac{\gamma_{m,p}}{p}\wedge\omega^{n-m+1}\,,$$
hence
$$|I_k|\leq c' \|\phi\|_{\Cc^2}\int_X\frac{|\log P_{k,p}|}{2p}\;d\mu\,.$$
Theorem \ref{T:Bka} implies that there exist a constant $c''>0$ and $p_0\in\mathbb N$ 
such that for all $z\in X\setminus\Sigma(h_k)$ and all $p\geq p_0$ one has 
 $$-c''\leq\log P_{k,p}\leq c''\log p+ c''\big|\log\dist \big(z,\Sigma(h_k)\big)\big|\,,\;1\leq k\leq m\,.$$ 
 We obtain that 
$$|I_k|\leq\frac{c'c''\|\phi\|_{\Cc^2}}{2p}
\int_X\,\big(\log p +\big|\log\dist \big(z,\Sigma(h_k)\big)\big|\big)\;d\mu\,,\;\,\forall\,p\geq p_0\,,1\leq k\leq m\,.$$
Applying  Lemma  \ref{L:main_estimate} below to the  right hand side yields that
$$|I_k|\leq \frac{c\log p}{mp}\,\|\phi\|_{\Cc^2} \,,\;\,\forall\,p\geq p_0\,,1\leq k\leq m\,,$$ 
with some constant $c=c(X,L_1,h_1,\ldots,L_m,h_m)>0.$ The proof is thereby completed.
\end{proof}

\medskip

\par  The following crucial  estimate  was used in the proof of Theorem \ref{T:FSspeed1}:

\begin{lemma}\label{L:main_estimate}
In the setting of Theorem \ref{T:FSspeed1} there exists
a constant  $C>0$ such that for every $1\leq k\leq m,$
$$
\int_X \big|\log\dist \big(z,\Sigma(h_k)\big)\big|\,c_1(L_1,h_1)
\wedge\ldots\wedge c_1(L_{k-1},h_{k-1})\wedge\frac{\gamma_{k+1,p}}{p}
\wedge\ldots\wedge\frac{\gamma_{m,p}}{p}\wedge \omega^{n-m+1}
<C.
$$
\end{lemma}

\begin{proof}
Let $U\subset X$ be a contractible Stein open set as in Section \ref{S:FSB}. 
For $1\leq j\leq m$ and $p\geq 1$, let $u_{j,p}\,,\,u_j$ be the psh functions defined 
in \eqref{e:FSpot}, so $dd^cu_j=c_1(L_j,h_j)$ and $dd^cu_{j,p}=\frac{1}{p}\,\gamma_{j,p}$ on $U.$ 

By shrinking $U$ if necessary, we may construct a negative psh function $v_k$ on 
$U$ such that  $v_k\leq \log\dist\big(z,\Sigma(h_k)\big)<0$
and $v_k$ is  smooth outside $\Sigma(h_k)$.  Indeed, let  $f_1,\ldots f_N$ be  holomorphic  functions defined
on a neighborhood of $\overline U$
such that 
$$\Sigma(h_k)\cap U=\{z\in U:\  f_1(z)=\ldots=f_N(z)=0\}.$$
We see  easily that  the function $v_k(z):= \log \big(|f_1(z)|^2+\ldots+ |f_N(z)|^2\big)-c'$, $z\in U$, 
with a suitable constant $c'>0$, does the job. Indeed, we may assume that the function 
$h(z)=|f_1(z)|^2+\ldots+ |f_N(z)|^2$ is Lipschitz on $U$, so there exist constants $c_1,c_2>0$
such that
in local coordinates $z$ on $U$, we have $|h(z)|\leq c_1|z-w|$ for all $z\in U$, $w\in\Sigma(h_k)$, 
hence $|h(z)|\leq c_2\dist(z,\Sigma(h_k))$, so $\log|h(z)|\leq\log\dist(z,\Sigma(h_k))+\log c_2$\,. 
  
By \cite[Theorem 5.1]{CM11} we have that $\frac{1}{p}\,\log P_{j,p}\to 0$ 
in $L^1(X,\omega^n)$, hence by \eqref{e:FSpot}, $u_{j,p}\to u_j$ in $L^1_{loc}(U)$, 
as $p\to\infty$, for each $k+1\leq j\leq m$. Recall that by \eqref{e:FScomp}, 
$u_{j,p}\geq u_j-\frac{c''}{p}$ holds on $U$ for all $p$ sufficiently large and some constant $c''>0$. 
Using the  assumption that  $\Sigma(h_j)$ are in general position, 
\cite[Theorem 3.5, Corollary 3.6]{FS95} implies that 
$$v_k dd^cu_{1}\wedge\ldots\wedge dd^cu_{k-1} \wedge dd^cu_{k+1,p}\wedge\ldots\wedge dd^cu_{m,p}
\to v_k dd^cu_{1}\wedge\ldots\wedge dd^cu_{k-1} \wedge dd^cu_{k+1}\wedge\ldots\wedge dd^cu_{m}$$
weakly on $U$ as $p\to\infty,$ and that the right hand side has locally bounded mass on $U$.
Since $X$ is compact, we may cover $X$ by a finite number of open sets $U$ as above. 
Writing $\omega^{n-m+1}$ as a finite sum of smooth positive forms such 
that each form is supported in at least one open set $U$,
the lemma follows from the last limit.
\end{proof}

\medskip

Let us close the paper with more examples of H\"older metrics with singularities. 

\smallskip
\noindent
(1) Consider a projective manifold $X$ and a smooth divisor $\Sigma\subset X$ .
By \cite{Kob84,TY86}, if $L=K_X\otimes\cO_X(\Sigma)$ is ample, there exists a complete K\"ahler-Einstein metric
$\omega$ on $M:=X\setminus\Sigma$ with $\operatorname{Ric}_\omega=-\omega$.
This metric has Poincar\'e type singularities, described as follows. We denote by $\mathbb{D}$ the unit disc in $\C$.
Each $x\in\Sigma$ has a coordinate neighborhood $U_x$ such that
\begin{equation*} \label{compl12,17}
U_x\cong\mathbb{D}^n,\;\,x=0,\;\,U_x\cap\Sigma\cong\{z=(z_1,\ldots,z_n):\,z_1=0\},\;\,
U_x\cap M\cong\mathbb{D}^\star\times\mathbb{D}^{n-1}.
\end{equation*}
Then $\omega=\frac{i}{2}\sum_{j,k=1}^ng_{jk}dz_j\wedge d\overline z_k$ is quasi-isometric 
to the Poincar\'e type metric
\[
\omega_P=\frac{i}{2}\frac{dz_1\wedge d\overline{z}_1}{|z_1|^{2}(\log|z_1|^2)^{2}}+
\frac{i}{2}\sum_{j=2}^n dz_j\wedge d\overline{z}_j\,.
\]
Let $\sigma$ be the canonical section of $\cO_{X}(\Sigma)$ (cf.\ \cite[p.\,71]{MM07}) 
and denote by $h_\sigma$ the metric induced by $\sigma$ on $\cO_{X}(\Sigma)$ 
(cf.\ \cite[Example\,2.3.4]{MM07}). Note also that $c_1(\cO_X(\Sigma),h_\sigma)=[\Sigma]$ 
by \cite[(2.3.8)]{MM07}. Consider the metric
\begin{equation}\label{e:hsigma}
h_{M,\sigma}:=h^{K_M}\otimes h_\sigma\;\,\text{on}\;\,L\mid_{_M}=
K_M\otimes\cO_X(\Sigma)\mid_{_M}\cong K_M.
\end{equation}
Note that $L$ is trivial over $U_x$ and the metric $h_{M,\sigma}$ has a weight 
$\varphi$ on $U_x\cap M\cong\mathbb{D}^\star\times\mathbb{D}^{n-1}$ given by 
$e^{2\varphi}=|z_1|^2\det[g_{jk}]$. So $dd^c\varphi=-\frac{1}{2\pi}\ric_\omega>0$ and 
$\varphi$ is psh on $U_x\cap M$.
We see as in \cite[Lemma 6.8]{CM11} that $\varphi$
extends to a psh function on $U_x$, and $h_{M,\sigma}$ extends uniquely to a positively curved
metric $h^L$ on $L$. By construction, $h^L$ is a H\"older metric with singularities. 

\smallskip
\noindent
(2) Let us specialize the previous example to the case of Riemann surfaces.
Let $X$ be a compact Riemann surface of genus $g$ and let $\Sigma=\{p_1,\ldots,p_d\}\subset X$.
It follows from the Uniformization Theorem that the following conditions are equivalent:

(i) $U=X\setminus\Sigma$ admits a complete K\"ahler-Einstein metric $\omega$
with $\operatorname{Ric}_\omega=-\omega$,

(ii) $2g-2+d>0$,

(iii) $L=K_X\otimes\cO_X(\Sigma)$ is ample,

(iv) the universal cover of $U$ is the upper-half plane
$\mathbb{H}$.

\noindent
If one of these equivalent conditions is satisfied, the K\"ahler-Einstein metric 
$\omega$ is induced by the Poincar\'e metric on $\mathbb{H}$.
In local coordinate $z$ centered at $p\in\Sigma$ we have $\omega=\frac{i}{2}gdz\wedge d\overline{z}$,
where $g$ satisfies $c|z|^{-2}(\log|z|^2)^{-2}\leq g\leq c^{-1}|z|^{-2}(\log|z|^2)^{-2}$, for some $c>0$,
in a punctured neighborhood of $p$.
Note that  $\omega$ extends to a closed strictly positive $(1,1)$-current, 
i.\,e.\ a positive measure of finite mass, on $X$.
By \cite[Lemma 6.8]{CM11} there exists a singular metric $h^L$
on $L$  such that $c_1(L,h^L)=\frac{1}{2\pi}\omega$ on $X$. The weight of $h^L$
near a point $p\in\Sigma$ has the form $\varphi=\frac12\log(|z|^2g)$,
which is H\"older with singularities.

\smallskip
\noindent
(3) Let $X$ be a complex manifold, $(L,h^L_0)$ a holomorphic line bundle on $X$
with smooth Hermitian metric such that $c_1(L,h^L_0)$ is a K\"ahler metric.
Let $\Sigma$ be a compact divisor with normal crossings.
Let $\Sigma_1,\dots,\Sigma_N$ be the irreducible components of $\Sigma$, 
so $\Sigma_j$ is a smooth hypersurface in $X$. Let $\sigma_j$ be holomorphic 
sections of the associated holomorphic line bundle $\cO_{X}(\Sigma_j)$ 
vanishing to first order on $\Sigma_j$ and let $|\cdot|_j$ be a smooth Hermitian metric on 
$\cO_{X}(\Sigma_j)$ so that $|\sigma_j|_j<1$ and $|\sigma_j|_j=1/e$ outside a relatively compact 
open set containing $\Sigma$. Set 
\begin{equation*}\label{e:F}
\Theta_{\delta}=\Omega+\delta dd^cF,\text{ where }\delta>0\,,\:\:F=-\frac{1}{2}\sum_{j=1}^N\log(-\log|\sigma_j|_j).
\end{equation*}
For $\delta$ sufficiently small $\Theta_{\delta}$ defines the generalized Poincar\'e metric \cite[Lemma 6.2.1]{MM07},
\cite[Section 2.3]{CM11}.
For $\varepsilon>0$,
$$h_\varepsilon^{L}=h^{L}_0\prod_{j=1}^N(-\log|\sigma_j|_j)^\varepsilon$$
is a singular Hermitian metric on $L$ which is H\"older with singularities.
The curvature $c_1(L,h^L_\varepsilon)$ is a strictly positive current on $X$, provided that $\varepsilon$ 
is sufficiently small (cf.\ \cite[Lemma 6.2.1]{MM07}). 
When $X$ is compact the curvature current of $h_\varepsilon$ dominates 
a small multiple of $\Theta_\delta$ on $X\setminus\Sigma$.

\smallskip
\noindent
(4) 
Let $X$ be a Fano manifold. Fix a Hermitian metric $h_0$ on $K_X^{-1}$
such that $\omega:=c_1(K_X^{-1},h_0)$ is a K\"ahler metric.
We denote by $PSH(X,\omega)$ the set of $\omega$-plurisubharmonic functions on $X$. 
Let $\Sigma$ be a smooth divisor in the linear system defined by 
$K_X^{-\ell}$, so there
exists a section $s\in H^0(X,K_X^{-\ell})$ with
$\Sigma=\operatorname{Div}(s)$.

Fix a smooth metric $h$ on the bundle $\cO_X(\Sigma)$ and 
let $\beta\in[0,1)$.
A conic K\"ahler metric $\widehat\omega$ on $X$ with cone angle 
$\beta$ along $\Sigma$, cf.\ \cite{Do12,Ti96}, is a current 
$\widehat\omega=\omega_\varphi=\omega+dd^c\varphi\in c_1(X)$, 
where $\varphi=\psi+|s|_h^{2\beta}\in PSH(X,\omega)$ and
$\psi\in\Cc^\infty(X)\cap PSH(X,\omega)$. 
In a neighborhood of a point of $\Sigma$
where $\Sigma$ is given by $z_1=0$ the metric $\widehat\omega$
is equivalent
to the cone metric $\frac{i}{2}(|z_1|^{2\beta-2}dz_1\wedge 
d\overline{z}_1+
\sum_{j=2}^ndz_j\wedge d\overline{z}_j)$.

The metric $\widehat\omega$ defines a 
singular metric $h_{\widehat\omega}$ on $K_X^{-1}$ which is H\"older with singularities.
Its curvature current is 
$\ric_{\,\widehat\omega}:=c_1(K_X^{-1},h_{\widehat\omega})=(1-\ell\beta)
\widehat\omega+\beta[\Sigma]$, where
$[\Sigma]$ is the current of integration on $\Sigma$.

\end{document}